\documentclass[10pt, reqno]{amsart}

\usepackage{hyperref}
\hypersetup{
	colorlinks=true,
	citecolor=blue,
	linkcolor=blue,
	filecolor=magenta,      
	urlcolor=cyan,
}

\usepackage{adjustbox}
\usepackage[bb=libus]{mathalpha}
\usepackage[margin=2.2cm]{geometry}
\usepackage{setspace}
\setstretch{1}
\usepackage[capitalize,noabbrev,nameinlink]{cleveref}
\usepackage{amsmath, amsthm, amssymb, graphicx, dsfont, extarrows, enumitem}
\usepackage[T1]{fontenc}
\usepackage{textcomp}
\usepackage{lmodern}
\usepackage{microtype} 
\usepackage{amscd}
\usepackage{mathrsfs}
\usepackage{tikz-cd}
\usetikzlibrary{matrix,patterns}
\usepackage[colorinlistoftodos]{todonotes}
\usepackage{color}
\usepackage{bbm}
\usepackage{verbatim}
\usepackage[mathscr]{euscript}
\usepackage{bm}

\numberwithin{equation}{section}
\theoremstyle{plain}
\newtheorem{thm}[equation]{Theorem}
\newtheorem*{thm*}{Theorem}

\newtheorem{thmx}{Theorem}

\newtheorem{prop}[equation]{Proposition}
    
\newtheorem{cor}[equation]{Corollary}       
\newtheorem{lem}[equation]{Lemma}

\theoremstyle{definition}

\newtheorem{ex}[equation]{Example}

\newtheorem{rem}[equation]{Remark}

\newtheorem{chunk}[equation]{}
\crefformat{chunk}{(#2#1#3)}

\newtheoremstyle{remboldstyle}
 {}{}{}{}{\bfseries}{.}{.5em}{{\thmname{#1 }}{\thmnumber{#2}.}{\thmnote{ #3}}}
\theoremstyle{remboldstyle}
\newtheorem{slab}[equation]{}

\newtheoremstyle{nonemboldstyle}
 {}{}{}{}{\bfseries}{.}{.5em}{{\thmnote{#3}}}
\theoremstyle{nonemboldstyle}

\newcommand{\Z}{\mathbb{Z}}

\newcommand{\mf}{\mathfrak}
\newcommand{\Hom}{\mathrm{Hom}}

\newcommand{\msf}[1]{\mathsf{#1}}

\newcommand{\mc}[1]{\mathcal{#1}}
\newcommand{\mrm}[1]{\mathrm{#1}}
\newcommand{\mbb}[1]{\mathbb{#1}}
\newcommand{\scr}[1]{\mathscr{#1}}

\newcommand{\T}{\mathsf{T}}
\newcommand{\U}{\mathsf{U}}

\newcommand{\1}{\mathds{1}}

\renewcommand{\mod}[1]{\mathrm{Mod}_{#1}}

\newcommand{\A}{\mathsf{A}}
\newcommand{\B}{\mathsf{B}}
\newcommand{\C}{\mathsf{C}}
\newcommand{\D}{\mathsf{D}}

\renewcommand{\mod}[1]{\msf{mod}(#1)}
\newcommand{\Mod}[1]{\msf{Mod}(#1)}
\newcommand{\Flat}[1]{\msf{Flat}(#1)}
\newcommand{\ab}{\msf{Ab}}

\newcommand{\ti}{\textit}

\setlength{\parindent}{0cm}
\setlength{\parskip}{0.6ex}
\newcommand{\rlim}{\varinjlim}
\newcommand{\llim}{\varprojlim}
\renewcommand{\t}{\text}
\renewcommand{\c}{\mrm{c}}

\newcommand{\op}{\mrm{op}}

\renewcommand{\Flat}[1]{\msf{Flat}(#1)}

\newcommand{\y}{\msf{y}}
\renewcommand{\bar}{\overline}

\renewcommand{\hat}{\widehat}

\renewcommand{\phi}{\varphi}

\newcommand{\Irr}[1]{\msf{IrrRk}(\T^\c)}

\usetikzlibrary{babel}
\title{Definable functors and Brown--Adams representability}
\author{Isaac Bird}

\address{Department of Algebra, Faculty of Mathematics and Physics, Charles University in Prague, Sokolovsk\'{a} 83, 186 75 Praha, Czech Republic}
\email{bird@karlin.mff.cuni.cz}

\begin{document}
\maketitle
\vspace{-6ex}
\begin{abstract}
The question of when the derived category of a ring satisfies Brown--Adams representability is revisited via studying the transfer of pure homological dimension along definable functors: it is shown that, for any ring, the pure global dimension of the derived category is at least the pure global dimension of the ring; expanding results of Beligiannis and Keller-Christensen-Neeman. This result is obtained by constructing `change of category' isomorphisms of PExt groups across definable functors. The same isomorphisms illustrate circumstances when one can transfer the property of Brown--Adams representability. We demonstrate how these methods can be used to test whether certain derived category of quasi-coherent sheaves are a Brown category. We also make an investigation into the structure of derived categories of von Neumann regular rings, which are shown in many cases to control Brown--Adams representability; this includes a new proof of the telescope conjecture, and a new and short proof that a coherent ring satisfies Freyd's (strong) generating hypothesis if and only if it is von Neumann regular.
\end{abstract}

\section{Introduction}
A renowned result in homotopy theory, due to Brown \cite{brown} and Adams \cite{adams}, is that the stable homotopy category of spectra $\msf{Sp}$ satisfies Brown--Adams representability. This means that any cohomological functor $F\colon(\msf{Sp}^{\omega})^{\op}\to\Mod{\Z}$ on finite spectra $\msf{Sp}^{\omega}$ is naturally isomorphic to $\Hom_{\msf{Sp}}(-,X)\vert_{\msf{Sp}^{\omega}}$ for some $X\in\msf{Sp}$, and if $\phi\colon F\to G$ is a natural transformation between cohomological functors, then $\phi\simeq\Hom_{\msf{Sp}}(-,f)$, where $f\colon X\to Y$ is a map in $\msf{Sp}$ between objects representing $F$ and $G$.

This statement can be translated into any compactly generated triangulated category $\T$: if $\T^{\c}$ denotes the compact objects of $\T$, one says that $\T$ satisfies Brown--Adams representability (or is a Brown category), provided every cohomological functor $F\colon(\T^{\c})^{\op}\to\Mod{\Z}$ is of the form $\Hom_{\T}(-,X)\vert_{\T^{\c}}$ for some object $X$, and any map $\phi\colon\Hom_{\T}(-,X)\vert_{\T^{\c}}\to\Hom_{\T}(-,Y)\vert_{\T^{\c}}$ is of the form $\Hom_{\T}(-,f)$ for some $f\colon X\to Y$. In other words, $\T$ is a Brown category, by definition, if and only if the restricted Yoneda functor
\[
\y\colon\T\to\Flat{\T^{\c}}, \, X\mapsto \Hom_{\T}(-,X)\vert_{\T^{\c}}
\]
is essentially surjective and full. Here $\Flat{\T^{\c}}$ denotes the category of cohomological functors $(\T^{\c})^{\op}\to\ab$, which is expressed this way as it is the ind-completion of the category $\T^{\c}$.

It is known that not every compactly generated $\T$ is a Brown category, and determining when a specific one is, or is not, is not elementary. Criteria to approach this problem were proved by Neeman \cite{neemantba} and later expanded by Beligiannis \cite{belrhap}: it was shown that $\T$ satisfies Brown--Adams representability if and only if $\msf{pgdim}(\T)$, the pure global dimension of $\T$, is at most one.  

It is through considering pure global dimension, or more specifically its ascent and descent, that we approach the question of whether or not a triangulated category is Brown. Our main investigation is in revisiting the question of when $\D(R)$ is a Brown category for a ring $R$, and before giving more details and stating results, we provide some historical context.

The question of $\D(R)$ being Brown was first considered in Neeman \cite{neemantba}, where an example credited to Keller showed $\D(\mbb{C}[x,y])$ is not Brown; the pertinent point is that this ring has a flat module of projective dimension at least two. In \cite{belrhap}, Beligiannis gave some further counterexamples, and provided a relationship between the pure global dimension of $\Mod{R}$ and $\D(R)$ in the case that $R$ is a (right) hereditary ring, showing the two are equal. This (among other questions) was furthered by Christensen, Keller and Neeman in \cite{ckn} to show that if $R$ is a coherent ring such that every finitely presented $R$-module has finite projective dimension, then $\msf{pgdim}(R)\leq\msf{pgdim}(\D(R))$. In this paper, the main result in this direction is the following, which removes all assumptions on rings.

\begin{thmx}\label{intro:thma}(\ref{prop:pgdimineq},\ref{prop:ringepi}, \ref{prop:hringepi})
Let $R$ be a ring, then there is an inequality $\msf{pgdim}(\D(R))\geq \msf{pgdim}(R)$. As such, $\D(R)$ is not a brown category whenever $\msf{pgdim}(R)\geq 2$. Moreover, if $f\colon R\to S$ a ring homomorphism, then
\begin{enumerate}
\item if $f$ is a ring epimorphism, one has $\msf{pgdim}(S)\leq \msf{pgdim}(R)$;
\item if $f$ is a homological ring epimorphism, one has $\msf{pgdim}(\D(S))\leq \msf{pgdim}(\D(R))$.
\end{enumerate}
\end{thmx}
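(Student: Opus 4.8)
The plan is to work entirely through the mechanism advertised in the abstract: the "change of category" isomorphisms of PExt groups across definable functors, together with the Neeman–Beligiannis characterization $\msf{pgdim}(\T)\leq 1 \iff \T$ is Brown. The key structural fact I would rely on is that pure global dimension is governed by the vanishing of higher $\msf{PExt}$ groups in the relevant functor/module categories, so that each inequality reduces to transporting $\msf{PExt}$-vanishing (or non-vanishing) across a suitable functor. For the opening inequality $\msf{pgdim}(\D(R))\geq\msf{pgdim}(R)$, I would realize $\Mod{R}$ inside $\D(R)$ via the inclusion of modules as stalk complexes in degree zero, check that this inclusion is (the relevant kind of) definable functor, and then use the change-of-category isomorphism to identify pure extensions computed in $\Mod{R}$ with pure extensions computed in $\D(R)$. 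Because the functor embeds $R$-modules faithfully and exactly, a nonzero class in $\msf{PExt}^{n}_{R}$ produces a nonzero class in the derived category, forcing $\msf{pgdim}(\D(R))\geq\msf{pgdim}(R)$; the statement about non-Brownness for $\msf{pgdim}(R)\geq 2$ is then immediate from the characterization.

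\emph{For part (1)}, given a ring epimorphism $f\colon R\to S$, the crucial point is that restriction of scalars $\Mod{S}\to\Mod{R}$ is a \emph{fully faithful} functor whose image is closed under the operations relevant to purity (it is a definable subcategory), precisely because $f$ being an epimorphism means $S\otimes_R S\cong S$ and hence $S$-modules, $S$-module homomorphisms, and their pure-exact structure are detected already at the level of $R$-modules. I would argue that restriction is a definable functor inducing isomorphisms $\msf{PExt}^{n}_{S}(M,N)\cong\msf{PExt}^{n}_{R}(M,N)$ for $S$-modules $M,N$, via the change-of-category isomorphism. Taking the supremum over all $M,N$ then yields $\msf{pgdim}(S)\leq\msf{pgdim}(R)$: every pure-exact complication over $S$ is already visible, with the same homological length, over $R$.

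\emph{For part (2)}, where $f\colon R\to S$ is a \emph{homological} ring epimorphism, the analogous statement at the derived level holds because homologicality is exactly the condition that makes the restriction functor $\D(S)\to\D(R)$ fully faithful (equivalently, $S\otimes^{\mbb{L}}_R S\simeq S$). I would check that this derived restriction is a definable functor between the compactly generated triangulated categories, so that the same $\msf{PExt}$-transfer machinery applies one level up, giving $\msf{PExt}^{n}_{\D(S)}\cong\msf{PExt}^{n}_{\D(R)}$ on the image and hence $\msf{pgdim}(\D(S))\leq\msf{pgdim}(\D(R))$.

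\textbf{The main obstacle} I anticipate is verifying that these restriction/inclusion functors genuinely satisfy the hypotheses of the change-of-category isomorphism theorem—i.e.\ that they are \emph{definable} in the precise sense used in the paper, commuting with the products and directed colimits (or coherent functors) that define the pure structure, and that the induced maps on $\msf{PExt}$ are isomorphisms rather than merely comparison maps. The case of part (2) is the most delicate, since one must control definability for a functor between triangulated (rather than abelian) categories and ensure that homologicality of $f$, not just its being an epimorphism, is what upgrades the faithfulness at the derived level; I expect the bulk of the work, and the place where the hypotheses must be used most carefully, to lie in establishing full faithfulness of derived restriction and its compatibility with the restricted-Yoneda/$\Flat{}$ presentation of purity.
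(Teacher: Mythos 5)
Your overall strategy is the paper's: every inequality is routed through the PExt change-of-category isomorphism (\cref{prop:pextisos}) together with the Neeman--Beligiannis criterion, and your part (1) matches the proof of \cref{prop:ringepi} essentially verbatim (restriction of scalars is definable with left adjoint $S\otimes_{R}-$, and the epimorphism condition $S\otimes_{R}(M)_{R}\simeq M$ turns the isomorphism of \cref{prop:pextisos} into $\t{PExt}^{n}_{R}((M)_{R},(N)_{R})\simeq\t{PExt}^{n}_{S}(M,N)$). For the opening inequality the paper does not use the stalk-complex embedding $\Mod{R}\to\D(R)$ directly; it uses the adjoint triple $i^{!}\dashv i^{*}\dashv i_{*}$ induced by the one-object inclusion $i\colon R\hookrightarrow\D(R)^{\c}$ and applies \cref{prop:pextisos} to $i^{!}\dashv i^{*}$, where $i^{*}$ is evaluation at $R$, getting $\t{PExt}^{n}_{R}(M,X)\simeq\t{Ext}^{n}_{\D(R)^{\c}}(i^{!}M,\y X)$. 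Your version amounts to using the other adjunction $i^{*}\dashv i_{*}$, since $i_{*}M\simeq\y(M[0])$; this does work, but "the inclusion of stalk complexes is definable with a left adjoint" is precisely the identification of $\y(-[0])$ with $i_{*}$ and of its left adjoint with evaluation at $R$, so it needs to be verified rather than assumed.

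The genuine gap is in part (2). The transfer machinery (\cref{prop:pextisos}, \cref{lem:localisation}) operates on the functor categories, so what you actually need is full faithfulness of the induced definable functor $F\colon\Flat{\D(S)^{\c}}\to\Flat{\D(R)^{\c}}$, and this does \emph{not} follow formally from full faithfulness of the derived restriction $\D(S)\to\D(R)$: the restricted Yoneda functor kills phantom maps and is not fully faithful, so a fully faithful triangulated functor need not induce a fully faithful functor on flat functor categories. You correctly flag "compatibility with the restricted-Yoneda presentation" as the obstacle, but you give no argument for it, and this is the entire content of the paper's proof of \cref{prop:hringepi}: one writes an arbitrary $M$ as a filtered colimit of representables and $N$ as an inverse limit of injectives $\y X_{j}$ with $X_{j}$ pure injective (Bergman's theorem, \cref{prel:bergman}), uses that $F$ preserves both kinds of (co)limits, and exploits that $\y$ \emph{is} fully faithful on Hom groups from compacts to pure injectives, as recalled in \cref{prel:puretri}. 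Only after this reduction does \cref{lem:localisation} apply; without it your argument for (2) is incomplete.
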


As such, we see that the pure global dimension of $\D(R)$ is bounded below by that of $\Mod{R}$ for any ring, and this is an optimal lower bound. As an example of a consequence, see \ref{ex:fdalgebras}, one obtains that a finite dimensional local algebra over an algebraically closed field has a Brown derived category if and only if every module is pure injective. We also note that the first itemised statement recovers some classic results in \cite{BL}.

Let us now discuss the pure structure and pure global dimension. The restricted Yoneda embedding $\y$ described above sends triangles to long exact sequences; a triangle is sent to a short exact sequence precisely when the connecting morphism vanishes under $\y$, that is, the connecting morphism is a phantom map. These are the pure triangles, and thus the pure structure of $\T$ is the intersection of the short exact sequences in $\Mod{\T^{\c}}$, the category of additive functors $(\T^{\c})^{\op}\to\ab$, and the triangles in  in $\T$, or equivalently purity can be thought of as the study of phantom maps. 

In fact, $\y$ establishes an equivalence between the pure structure on $\T$ and the pure structure on $\Flat{\T^{\c}}$, and the latter has enough pure injective and pure projective objects. Accordingly, one can construct resolutions in $\Flat{\T^{\c}}$ with respect to these objects from which one defines the pure projective and pure injective dimensions, which in turn yield the pure global dimension of $\Flat{\T^{\c}}$. The pure global dimension  of $\T$ is defined to be the pure global dimension of $\Flat{\T^{\c}}$.

The main tool used to prove \cref{intro:thma}, among other things, is how pure homological dimension transfers across functors which preserve the pure structure. These are the definable, or interpretation, functors, which, in the triangulated setting were introduced in \cite{bwdeffun}. A functor $F\colon\T\to \U$ between compactly generated triangulated categories is definable if (and only if) it preserves coproducts, products, and pure triangles. Any such functor induces a unique product and direct limit preserving functor $\hat{F}\colon\Flat{\T^{\c}}\to\Flat{\U^{\c}}$ that is compatible with $\y$, and as such, understanding the transfer of pure homological dimension in this accessible setting is good enough to understand what happens on the triangulated level. This is our first main result, which is the workhorse of the paper.

\begin{thmx}\label{intro:thmb}(\ref{prop:pextisos})
Let $F\colon\A\to\B$ be a definable functor between finitely accessible categories with products, and suppose that $F$ admits a left adjoint $\Lambda\colon\B\to\A$. Then there are isomorphisms
\[
\mrm{PExt}_{\msf{B}}^{n}(B,F(A))\simeq \mrm{PExt}_{\A}^{n}(\Lambda(B),A)
\]
for all $A\in\A, B\in\B$ and $n\geq 0$.
\end{thmx}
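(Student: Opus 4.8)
The plan is to compute both sides of the claimed isomorphism by means of a single pure-projective resolution of $B$ in $\B$, transporting it to $\A$ along the left adjoint $\Lambda$ and using the adjunction to identify the resulting $\Hom$-complexes. Since $\B$ is finitely accessible it has enough pure-projectives (direct summands of coproducts of finitely presented objects), so I fix a pure-projective resolution $P_\bullet\to B$ with respect to the pure-exact structure and write
\[
\mrm{PExt}^n_{\B}(B,F(A))\cong H^n\bigl(\Hom_{\B}(P_\bullet,F(A))\bigr).
\]
The adjunction $\Lambda\dashv F$ gives a natural isomorphism $\Hom_{\B}(-,F(A))\cong\Hom_{\A}(\Lambda(-),A)$, so applying it degreewise yields an isomorphism of cochain complexes $\Hom_{\B}(P_\bullet,F(A))\cong\Hom_{\A}(\Lambda(P_\bullet),A)$, naturality ensuring compatibility with the differentials. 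The whole argument then reduces to showing that $\Lambda(P_\bullet)\to\Lambda(B)$ is again a pure-projective resolution, whence the right-hand cohomology computes $\mrm{PExt}^n_{\A}(\Lambda(B),A)$.

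Establishing that last point splits into two ingredients. First, $\Lambda$ preserves pure-exactness: as a left adjoint it is additive and preserves all colimits, and in a finitely accessible category every pure-exact sequence is a direct limit of split exact sequences; since $\Lambda$ carries split sequences to split sequences and commutes with $\rlim$, it sends pure-exact sequences to direct limits of split sequences, hence back to pure-exact sequences. Second, and more crucially, $\Lambda$ preserves pure-projectivity. Here I would use the intrinsic characterization that $Q$ is pure-projective exactly when $\Hom(Q,-)$ is exact on pure-exact sequences, combined with the adjunction identity $\Hom_{\A}(\Lambda(P),-)\cong\Hom_{\B}(P,F(-))$: given a pure-exact sequence in $\A$, definability of $F$ renders its image under $F$ pure-exact in $\B$, and then pure-projectivity of $P$ makes $\Hom_{\B}(P,F(-))$ exact on it, so $\Lambda(P)$ is pure-projective. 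Combining the two, applying $\Lambda$ to each syzygy short exact sequence of $P_\bullet$ preserves both the pure-exactness (hence ordinary exactness) and the pure-projectivity of the terms, so splicing shows $\Lambda(P_\bullet)\to\Lambda(B)$ is a pure-projective resolution.

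With these in hand, passing to cohomology in the isomorphism of $\Hom$-complexes gives
\[
\mrm{PExt}^n_{\B}(B,F(A))\cong H^n\bigl(\Hom_{\A}(\Lambda(P_\bullet),A)\bigr)\cong\mrm{PExt}^n_{\A}(\Lambda(B),A)
\]
for every $n\geq 0$, the case $n=0$ recovering the defining adjunction $\Hom_{\B}(B,F(A))\cong\Hom_{\A}(\Lambda(B),A)$.

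The main obstacle I anticipate is the preservation of pure-projectivity by $\Lambda$. One does \emph{not} expect $\Lambda$ to preserve finitely presented objects, so the argument cannot proceed by tracking generators directly; the force of the statement is that definability of $F$ is precisely what pushes pure-projectivity through the adjoint, via the $\Hom$-interchange above. A secondary point requiring care is that the pure-exact structure in a finitely accessible category with products genuinely is an exact structure with enough projectives and that $\mrm{PExt}$ is computed by such resolutions, so that the $H^n$ identifications are legitimate; I take these structural facts as part of the foundational setup.
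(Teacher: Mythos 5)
Your argument is correct, but it takes a genuinely different route from the paper, so a comparison is in order. The paper never touches pure-projective resolutions: it transports everything into the locally coherent categories $\mbb{D}(\A)$ and $\mbb{D}(\B)$ of \cref{prel:catDA}, extends $F$ to a definable functor $\bar{F}$ with an exact left adjoint $\lambda$, and runs the Grothendieck spectral sequence for $\Hom_{\mbb{D}(\B)}(Y,-)\circ\bar{F}\simeq\Hom_{\mbb{D}(\A)}(\lambda Y,-)$; this collapses because $\bar{F}$ is exact on fp-injectives, and the remaining work is to identify $\lambda\circ d_{\B}$ with $d_{\A}\circ\Lambda$, done by writing objects as inverse limits of pure-injectives via \cref{prel:bergman}. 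In effect the paper resolves on the $\A$-side by (pure-)injectives, whereas you resolve $B$ by pure-projectives and push the resolution through $\Lambda$. Your two key lemmas are sound: the $\Hom$-interchange showing that $\Lambda$ preserves pure-projectivity is exactly right --- note that the paper obtains this statement only afterwards, as \cref{cor:esssurj}(1), as a \emph{consequence} of the isomorphism, so you are proving that corollary first and using it as input, a legitimate reversal --- and $\Lambda$ does preserve pure-exactness. Two points deserve care. First, your reduction of pure-exactness preservation to the fact that pure-exact sequences are filtered colimits of split ones needs that characterization in arbitrary finitely accessible additive categories (it is true, but should be cited; alternatively, one can verify pure-acyclicity of $\Lambda(P_{\bullet})\to\Lambda(B)$ by testing against pure-injectives $I$ of $\A$ via $\Hom_{\A}(\Lambda(-),I)\simeq\Hom_{\B}(-,F(I))$ and the fact that the definable functor $F$ preserves pure-injectives, which avoids the colimit description entirely). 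Second, since the paper \emph{defines} $\mrm{PExt}^{n}_{\A}(A,A')$ as $\t{Ext}^{n}_{\mbb{D}(\A)}(d_{\A}A,d_{\A}A')$, you must invoke balance --- $d_{\A}$ carries pure-projective resolutions to projective resolutions in $\mbb{D}(\A)$, so the two computations agree --- to match your groups with the paper's. What the paper's heavier machinery buys is the refinement in the remark following \cref{prop:pextisos}: the adjoint $\lambda$ exists even when $\Lambda$ does not, yielding $\mrm{PExt}^{n}_{\B}(B,F(A))\simeq\t{Ext}^{n}_{\mbb{D}(\A)}(\lambda(d_{\B}B),d_{\A}A)$ unconditionally, a statement your resolution argument cannot reach; what your route buys is a shorter, spectral-sequence-free proof.
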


Here, $\t{PExt}$ denotes the derived functors of Hom with respect to pure projective and pure injective resolutions, which measure the pure homological dimension of objects. We note that any definable functor of triangulated categories, as many other definable functors which appear naturally, translate into the above framework. In relation to \cref{intro:thma}, it is through the above isomorphisms applied to particular choices of definable functors between $\Mod{R}$, $\D(R)$, and $\D(S)$ that one obtains the result.

However, while the inequalities of \cref{intro:thma} are more suited to illustrate when Brown--Adams representability fails, we are still able to use them, as well as \cref{intro:thmb}, to provide certain positive examples of when it holds.

\begin{thmx}\label{intro:thmc}(\ref{thm:BAfunctoriality})
Let $\T$ and $\U$ be compactly generated triangulated categories, and suppose $F\colon\Flat{\T^{\c}}\to\Flat{\U^{\c}}$ is a definable functor. 
\begin{enumerate}
\item If $F$ is essentially surjective up to direct summands and $\T$ is a Brown category, so is $\U$;
\item If $F$ is fully faithful and $\U$ is a Brown category, then so is $\T$.
\end{enumerate}
\end{thmx}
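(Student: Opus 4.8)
The plan is to recast both statements as assertions about pure global dimension and then transport the relevant homological vanishing across $F$. Recall, from the criterion established in \cite{neemantba,belrhap} and already recalled above, that a compactly generated triangulated category is a Brown category precisely when its pure global dimension is at most one, and that $\msf{pgdim}(\T)$ is by definition the pure global dimension of $\Flat{\T^{\c}}$. Since $\Flat{\T^{\c}}$ has enough pure injectives and $\mrm{PExt}$ is balanced, being Brown is equivalent to the vanishing $\mrm{PExt}^{2}_{\Flat{\T^{\c}}}(-,-)=0$, that is, to every object of $\Flat{\T^{\c}}$ having pure injective dimension at most one. Thus both parts reduce to transporting the bound ``pure injective dimension $\leq 1$'' along the definable functor $F$.

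The key input I would establish first is that a definable functor preserves pure injectivity and pure exactness. For pure exactness, every pure exact sequence is a direct limit of split exact sequences; as $F$ preserves direct limits and, being additive, split sequences, it preserves pure exact sequences. For pure injectivity I would invoke the summation-map characterisation of algebraic compactness: an object $M$ is pure injective exactly when, for every set $\Lambda$, the summation map $M^{(\Lambda)}\to M$ factors through the canonical monomorphism $M^{(\Lambda)}\hookrightarrow M^{\Lambda}$. Since $F$ preserves coproducts, products, the canonical inclusions and the summation maps, it carries a factorisation witnessing pure injectivity of $M$ to one for $F(M)$. Consequently $F$ takes every pure injective resolution to a pure injective resolution, which is the mechanism driving both parts.

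For (1), suppose $\T$ is Brown. Each $A'\in\Flat{\T^{\c}}$ then admits a pure injective resolution $0\to A'\to I^{0}\to I^{1}\to 0$ of length at most one, and applying $F$ yields such a resolution of $F(A')$ in $\Flat{\U^{\c}}$; hence $\mrm{PExt}^{2}_{\Flat{\U^{\c}}}(-,F(A'))=0$. Since $F$ is essentially surjective up to summands, every object of $\Flat{\U^{\c}}$ is a retract of some $F(A')$, and as $\mrm{PExt}^{2}$ is additive and retract-closed in each variable we conclude $\mrm{PExt}^{2}_{\Flat{\U^{\c}}}(-,-)=0$. Therefore $\msf{pgdim}(\U)\leq 1$ and $\U$ is Brown.

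For (2), full faithfulness lets me identify the $\mrm{PExt}$ groups directly. Given $A,A'\in\Flat{\T^{\c}}$, choose a pure injective resolution $I^{\bullet}$ of $A'$; then $F(I^{\bullet})$ resolves $F(A')$ by pure injectives, and full faithfulness furnishes an isomorphism of cochain complexes $\Hom_{\Flat{\T^{\c}}}(A,I^{\bullet})\simeq\Hom_{\Flat{\U^{\c}}}(F(A),F(I^{\bullet}))$, whence $\mrm{PExt}^{n}_{\Flat{\T^{\c}}}(A,A')\simeq\mrm{PExt}^{n}_{\Flat{\U^{\c}}}(F(A),F(A'))$ for all $n$; alternatively these isomorphisms follow from \cref{intro:thmb} whenever $F$ has a left adjoint, since full faithfulness makes the counit invertible. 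If $\U$ is Brown the right-hand group vanishes for $n=2$, so $\mrm{PExt}^{2}_{\Flat{\T^{\c}}}(-,-)=0$, giving $\msf{pgdim}(\T)\leq 1$ and so $\T$ is Brown. The point I expect to require the most care is the preservation lemma—confirming that pure injectivity is preserved in this possibly non-abelian finitely accessible setting, and that $\mrm{PExt}$ is genuinely computed by the resulting pure injective resolutions; the remaining steps are formal.
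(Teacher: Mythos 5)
Your argument is correct, but it takes a genuinely different route from the paper's. You work throughout with the formulation ``Brown iff $\msf{pgdim}\leq 1$'' and transport pure injective resolutions directly: for (1) you push a length-one pure injective resolution forward along $F$, using only that definable functors preserve pure exactness and pure injective objects (a fact the paper itself just cites from \cite[\S 13]{DAC}), and for (2) you obtain a full isomorphism $\mrm{PExt}^{n}_{\Flat{\T^{\c}}}(A,A')\simeq\mrm{PExt}^{n}_{\Flat{\U^{\c}}}(FA,FA')$ by applying full faithfulness degreewise to a transported resolution. The paper instead proves (1) by extending $F$ to $\bar{F}\colon\Mod{\T^{\c}}\to\Mod{\U^{\c}}$ and invoking its left adjoint $\Lambda$ together with the adjunction isomorphisms of \cref{prop:pextisos} to get $\t{Ext}^{2}_{\U^{\c}}(g,F(f))\simeq\t{Ext}^{2}_{\T^{\c}}(\Lambda(g),f)=0$ for \emph{all} $g\in\Mod{\U^{\c}}$; and it proves (2) inside $\Mod{\T^{\c}}$, first using the flat cotorsion pair of \cref{ex:flatcotpair} to reduce the vanishing of $\t{Ext}^{2}_{\T^{\c}}(f,-)$ on all of $\Mod{\T^{\c}}$ to flat targets, and then using only the \emph{injectivity} of the induced map on Yoneda $\t{Ext}^{2}$ groups. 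What each approach buys: your version of (1) shows the left adjoint is not actually needed for that implication, and your version of (2) yields the stronger conclusion $\msf{pgdim}(\T)\leq\msf{pgdim}(\U)$, consistent with \cref{lem:localisation}; the paper's version instead makes explicit the passage between the $\mrm{PExt}$-on-flats criterion and the form of Neeman's criterion recorded in \cref{neemancrit} (vanishing against arbitrary objects of $\Mod{\T^{\c}}$), a step your argument delegates to the cited theorem of Beligiannis. The only point requiring care is the one you flag yourself, namely that $\mrm{PExt}$ is genuinely computed by pure injective resolutions and that these are preserved by definable functors; both follow from the identification $\mrm{PExt}^{i}_{\A}(A,A')\simeq\t{Ext}^{i}_{\mbb{D}(\A)}(d_{\A}A,d_{\A}A')$ together with the preservation properties recalled in \cref{prel:deffunctors}, so there is no gap.
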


The combination of \cref{intro:thma} and \cref{intro:thmc} also enables a straightforward test as to whether, for example, the category $\D_{\t{qc}}(X)$ fails to be a Brown category, where $X$ is a quasi-compact quasi-separated scheme. This is done in \cref{prop:brownschemes}, where it is shown that if $\D_{\t{qc}}(X)$ is Brown, then $\msf{pgdim}(\scr{O}_{X,x})\leq 1$ for all $x\in X$. 

Returning to derived categories of rings, as \cref{intro:thma} illustrates, understanding ring epimorphisms is enough to determine whether Brown representability fails. And in fact, for many rings, such as all commutative rings, it is possible to test this on a single ring epimorphism. A classic result of Olivier \cite{olivier} demonstrates how every commutative ring $A$ admits a unique non-zero epimorphism $A\to A^{\t{ab}}$ where $A^{\t{ab}}$ is a commutative von Neumann regular ring. These rings are particularly relevant since they are those for which pure global dimension coincides with the usual global dimension. Olivier's result was recently expanded to all rings in \cite{herzinn}, and thus for any ring $R$ there is a ring epimorphism $R\to R^{\t{ab}}$ where $R^{\t{ab}}$ is an abelian von Neumann regular ring (although it can be the case in this generality that $R^{\t{ab}}$ vanishes). As such, as shown in \cref{prop:vnrdeterminesbrown}, von Neumann regular rings determine whether derived categories of many rings are Brown categories.

Along the way to getting to this point, we investigate some parts of the structure of the derived category of von Neumann regular rings. As a consequence, we obtain, among other things, a new and very short proof that a coherent ring satisfies Freyd's generating hypothesis if and only if it is von Neumann regular (see  \cref{prop:fghholds}), as well as a new proof of the telescope conjecture for such rings in \cref{prop:telescope}. Lastly, in \cref{sec:beyondderived}, we show how some of the results concerning bounding the pure global dimension derived categories of rings can be extended to, in some cases, all compactly generated triangulated categories; this includes a consideration of the hearts of cosilting t-structures in \cref{prop:cosilting}.

\subsection*{Acknowledgements}
This research was supported by project PRIMUS/23/SCI/006 from Charles University, and by Charles University Research Centre program No. UNCE/24/SCI/022. I am grateful to M. Prest, S. Virili, J. Williamson and A. Zvonareva for constructive comments.

\section{Preliminaries on modules}
Throughout, all categories will be assumed to be preadditive, and all functors will be assumed to be additive. For brevity, we let $\ab$ denote the category of abelian groups. For categories $\A$ and $\B$, we let $(\A,\B)$ denote the functor category of all additive functors $\A\to \B$.

\begin{slab}[Module categories]\label{prel:modules} Let $\scr{R}$ be a small preadditive category. 
By definition, a right $\scr{R}$-module is an additive functor $\scr{R}^{\op}\to\ab$. The category of right $R$-modules will be denoted $\Mod{\scr{R}}$. This is a locally finitely presented Grothendieck abelian category. The full subcategory of finitely presented $\scr{R}$-module will be denoted $\mod{\scr{R}}$. A module $M\in\Mod{\scr{R}}$ is finitely presented if and only if it has a presentation 
\[
\bigoplus_{i=1}^{n}\Hom_{\scr{R}}(-,A_{i})\longrightarrow  \bigoplus_{j=1}^{m}\Hom_{\scr{R}}(-,B_{j})\longrightarrow  M\to 0
\]
where $A_{i},B_{j}\in\scr{R}$. A module $M\in\Mod{\scr{R}}$ is \emph{flat} if it is isomorphic to a direct limit of representable functors, that is $M\simeq\rlim_{i}\Hom_{\scr{R}}(-,R_{i})$, where each $R_{i}\in\scr{R}$. An object $X\in\Mod{\scr{R}}$ is \emph{fp-injective} provided $\t{Ext}^{1}(M,X)=0$ for all $M\in\mod{\scr{R}}$. We shall write $\msf{Flat}(\scr{R})$ and $\msf{fpInj}(\scr{R})$ for the full subcategories of flat and fp-injective modules, respectively. The full subcategory of injective objects will likewise be denoted $\msf{Inj}(\scr{R})$.

Given an additive functor $f\colon\scr{R}\to\scr{S}$ between small preadditive categories, there is an induced adjoint triple
\[
\begin{tikzcd}[column sep = 1.5cm]
\Mod{\scr{S}} \arrow[r, "f^{*}" description] \arrow[r, leftarrow, shift left = 1.5ex, "f^{!}"] \arrow[r, leftarrow, shift right = 1.5ex, swap, "f_{*}"]& \Mod{\scr{R}}
\end{tikzcd}
\]
where $f^{*}(M)=M\circ f$ is the restriction of scalars, and writing $F\in\Mod{R}$ as $M\simeq\msf{colim}_{I}\Hom_{\scr{R}}(-,R_{i})$ via coproducts and cokernels, one has
\[
f^{!}(M)=\msf{colim}_{I}\Hom_{\scr{S}}(-,f(R_{i})).
\]
\end{slab}

\begin{slab}[Purity]
Let $\A$ be a finitely accessible category with products, and set $\C=\msf{fp}(\A)$. This means that $\A$ is a category with all direct limits, and for each $A\in\A$ there is an isomorphism $A\simeq\rlim_{I}C_{i}$ with $C_{i}\in\C$.

A short exact sequence $0\to X\to Y\to Z\to 0$ in $\A$ is \emph{pure} if $0\to (A,X)\to\Hom_{\A}(A,Y)\to\Hom_{\A}(A,Z)\to 0$ is exact for all $A\in\msf{fp}(\A)$. In this case we say that the map $X\to Y$ is a \emph{pure monomorphism} and that $X$ is a \emph{pure subobject} of $Y$, and similarly define the notions of pure subobjects and quotients.

An object $X\in\A$ is \emph{pure injective} if any pure monomorphism $X\to Y$ splits, while $P\in\A$ is \emph{pure projective} if any pure epimorphism $M\to P$ splits. The full subcategory of pure injective objects in $\A$ will be denoted $\msf{Pinj}(\A)$, and likewise define $\msf{Pproj}(\A)$.
\end{slab}

\begin{ex}
Any epimorphism $M\to F$, with $F\in\Flat{\scr{R}}$ is pure, while any monomorphism $X\to Y$, with $X\in\msf{fpInj}(\scr{R})$ is pure, see \cite[Propositions 5.6 and 5.10]{DAC}.
\end{ex}

\begin{slab}[Triangulated categories]\label{prel:puretri}
If $\T$ is a compactly generated triangulated category, then the subcategory of compact objects will be denoted $\T^{\c}$. There is an equivalence of categories $\Flat{\T^{\c}}=\msf{fpInj}(\T^{\c})$, and these coincide precisely with the category of cohomological functors, see \cite[Lemma 2.7]{krsmash}. As such, the \emph{restricted Yoneda embedding} 
\[
\y\colon X\mapsto \Hom_{\T}(-,X)\vert_{\T^{\c}}
\]
can be viewed as a functor $\y\colon\T\to\Flat{\T^{\c}}$. 

Purity in $\T$ is defined in terms of purity in $\Flat{\T^{\c}}$. A triangle $X\to Y\to Z\to \Sigma X$ is \emph{pure} if and only if $0\to \y X\to \y Y\to \y Z\to 0$ is pure in $\Flat{\T^{\c}}$. Similarly, an object $X\in\T$ is \emph{pure injective} if and only if $\y X$ is pure injective in $\Flat{\T^{\c}}$, which is equivalent to being injective in $\Mod{\T^{\c}}$. We let $\msf{Pinj}(\T)$ denote the class of pure injective objects in $\T$, and note there is an equivalence of categories $\y\colon\msf{Pinj}(\T)\xrightarrow{\simeq}\msf{Pinj}(\Flat{\T^{\c}})=\msf{Inj}(\T^{\c})$.

We note that $\y$ is seldom fully faithful, as its kernel consists of the phantom maps. However, if $T\in\T$ and $X\in\msf{Pinj}(\T)$, the canonical map $\Hom_{\T}(T,X)\to \Hom(\y T,\y X)$ is a bijection, and this is also true whenever $T\in\msf{Pproj}(\T)$ and $X\in\T$ is arbitrary, see \cite[Theorem 1.8]{krsmash}.
\end{slab}

\begin{slab}[Definable functors]\label{prel:deffunctors}
Central to our study will be functors which transfer purity. The appropriate notion for this is that of a definable functor. A functor $F\colon \A\to \B$ between finitely accessible categories with products is \emph{definable} if it preserves filtered colimits and products. This is the appropriate notion of a purity preserving functor, since, as illustrated in \cite[\S 13]{DAC}, definable functors preserve pure exact sequences and pure injective objects. The connection between definable functors and model-theoretic methods is described at the same place.

The corresponding construction for compactly generated triangulated categories was introduced in \cite{bwdeffun}. A functor $F\colon\T\to\U$ between compactly generated triangulated categories is definable if there is a (unique) definable functor $\widehat{F}\colon\Flat{\T^{\c}}\to\Flat{\U^{\c}}$ such that $\widehat{F}\circ\y\simeq\y\circ F$; this is equivalent to $F$ preserving products, coproducts, and pure triangles, see \cite[Theorem 4.16]{bwdeffun}.
\end{slab}

\begin{slab}[From finitely accessible to locally coherent categories]\label{prel:catDA}
A \emph{locally coherent} abelian category is a finitely accessible abelian category whose finitely presented objects also form an abelian category. Given $\A$, a finitely accessible category with products, Crawley-Boevey introduced a unique embedding of $\A$ into a locally coherent category $\mbb{D}(\A)$, see \cite[\S 3.3]{CBlfp}. We recall its construction here.

Let $\C=\msf{fp}(\A)$, and identify $\A\simeq\msf{Flat}(\C)$, which we may do by \cite[Theorem 1.4]{CBlfp}. The Yoneda embedding $\msf{h}\colon\C \to \msf{fp}(\C,\ab)^{\op}$ given by $C\mapsto\Hom_{\A}(C,-)$ induces, by \cref{prel:modules}, a functor 
\[
\msf{h}^{!}\colon \Mod{\C}\to\Mod{\msf{fp}(\C,\ab)^{\op}}\simeq (\msf{fp}(\C,\ab),\ab)
\] 
which, being the left adjoint to the exact functor $\msf{h}^{*}$, preserves colimits and projective objects, and therefore also flat objects. As such, there is a functor
\[
d_{\A}\colon \A\to\mbb{D}(\A):=\Flat{\msf{fp}(\C,\ab),\ab}
\]
which, as illustrated in \cite[\S3.4 Lemma 2]{CBlfp}, is fully faithful and definable. The category $\mbb{D}(\A)$ is locally coherent, and the functor $d_{\A}$ restricts to give equivalences
\[
d_{\A}\colon\A\xrightarrow{\simeq}\msf{fpInj}(\mbb{D}(\A))
\]
and $\msf{Pinj}(\A)\simeq\msf{Inj}(\mbb{D}(\A))$. We note that a sequence $0\to X\to Y\to Z\to 0$ in $\A$ is pure if and only if its image under $d_{\A}$ is exact in $\mbb{D}(\A)$.

As illustrated in \cite[Corollary 9.8]{krausefunlfp}, the assignment $\A\mapsto \mbb{D}(\A)$ gives a bijection, up to equivalence, between finitely accessible categories with products and locally coherent abelian categories whose fp-injective objects are finitely accessible; hence we may uniquely pass from $\A$ to $\mbb{D}(\A)$ and back.
\end{slab}

\begin{ex}
The category $\Mod{\scr{R}}$ is locally coherent if and only if $\scr{R}$ has pseudokernels. This is certainly the case when $\scr{R}$ is a triangulated category, where pseudokernels are just cocones. If $R$ is a ring, $\Mod{R}$ is locally coherent if and only if $R$ is right coherent. See \cite[\S 6]{DAC} for more on locally coherent categories.
\end{ex}

\begin{slab}[Pure homological dimensions]
As usual, let $\A$ denote a finitely accessible category with products. As it is locally coherent, the category $\mbb{D}(\A)$ has enough injectives, and since $d_{\A}$ is fully faithful it follows that $\A$ has pure injective envelopes, that is there is a pure monomorphism $X\to PE(X)$ for each $X\in\A$. More specifically, taking the injective envelope $0\to d_{A}X\to d_{A}E\to \Omega^{-1}X\to 0$ in $\mbb{D}(\A)$, and noting that $\Omega^{-1}X\in\msf{fpInj}(\mbb{D}(\A))$, the fully faithfulness of $d_{A}$ enables us to pull the exact sequence back to a pure exact sequence in $\A$.

We may therefore consider pure injective resolutions of objects of $\A$. In the same way, since $d_{\A}$ sends pure projective objects of $\A$ to projective objects in $\mbb{D}(\A)$, we may consider pure projective resolutions.

The PExt functors are defined to be the derived functors of $\Hom$ with respect to pure resolutions, see \cite[p. 95]{simsonpgd}. Similarly to usual projective and injective dimension, one can define the pure projective and pure injective dimension in terms of vanishing of $\t{PExt}_{\A}^{n}(-,-)$; explicitly
\begin{align*}
\msf{ppd}(A)=\sup\{n:\t{PExt}_{\A}^{n}(A,-)\neq 0\} & & \t{ and } & & \msf{pid}(\A)=\sup\{n:\t{PExt}_{\A}^{n}(-,A)\neq 0\}.   
\end{align*}
The \emph{pure global dimension} of $\A$ is defined to be
\[
\msf{pgdim}(\A)=\sup\{\msf{ppd}(A):A\in\A\}=\sup\{\msf{pid}(A):A\in\A\}.
\]
\end{slab}

\begin{slab}[Inverse limits of injective objects]\label{prel:bergman}
We saw above that the direct limit closure of the projective objects are the flat objects. We shall also frequently use the dual result, which is a generalisation of a result of Bergman. For a ring $R$, Bergman's result \cite[Theorem 2]{bergman} states that every $R$-module is isomorphic to an inverse limit of injective modules. In fact, it was shown in \cite[Proposition 2.3]{ban} that the same result holds in any Grothendieck category.
\end{slab}

\begin{slab}[Cotorsion pairs]
Let $\scr{A}$ be a locally coherent Grothendieck category and $A\in\scr{A}$. For any $i\in \Z$ we set $A^{\perp_{i}}=\{M\in\scr{A}:\t{Ext}_{\scr{A}}^{i}(A,M)=0\}$, and define various other decorations similarly and extend these to classes of objects. A pair of classes $(\scr{C},\scr{D})$ in $\scr{A}$ is a \emph{cotorsion pair} if $\scr{C}^{\perp_{1}}=\scr{D}$ and $\scr{C}=\,^{\perp_{1}}\scr{D}$. A cotorsion pair in $\scr{A}$ is said to be \emph{hereditary} if $\msf{Inj}(\scr{A})\subseteq\scr{D}$ and $\scr{D}$ is closed under cokernels of monomorphisms. This implies that $\t{Ext}_{\scr{A}}^{i}(\scr{C},\scr{D})=0$ for all $i>0$, see \cite[Theorem 2.16]{gillespiebook}. A cotorsion pair in $\scr{A}$ is \emph{complete} if for all $A\in \scr{A}$ there are short exact sequence $0\to D\to C\to A\to 0$ and $0\to A\to C'\to D'\to 0$ with $C,C'\in\scr{C}$ and $D,D'\in\scr{D}$.
\end{slab}

\begin{ex}\label{ex:flatcotpair}
Let $\scr{R}$ be a skeletally small preadditive category. An object $G\in\Mod{\scr{R}}$ is \emph{cotorsion} if $\t{Ext}_{\scr{R}}^{1}(F,G)=0$ for all $F\in\Flat{\scr{R}}$, and we let $\msf{Cot}(\scr{R})$ denote the class of cotorsion objects in $\Mod{\scr{R}}$. It is shown in \cite[Proposition 4.7]{stovicekpure} that $(\Flat{\scr{R}},\msf{Cot}(\scr{R}))$ is a complete hereditary cotorsion pair in $\Mod{\scr{R}}$.
\end{ex}

We now briefly recall the localisation theory of abelian categories. More detailed expositions can be found in, for instance, \cite{psl}, \cite[\S7]{DAC} or \cite{stenstrom}. In particular, proofs of all the recollections below can be found at these references.

\begin{slab}[Localisations of locally coherent categories]\label{prel:localisation}
Let $\scr{A}$ be a locally coherent Grothendieck category. A full subcategory $\scr{S}\subseteq\scr{A}$ is \emph{Serre} if given a short exact sequence $0\to A_{1}\to A_{2}\to A_{3}\to 0$ in $\scr{A}$, one has $A_{1},A_{3}\in\scr{A}$ if and only if $A_{2}\in\scr{A}$. Associated to a Serre subcategory is an exact localisation functor $Q_{\scr{S}}\colon\scr{A}\to\scr{A}/\scr{S}$, where $\scr{A}/\scr{S}$ is an Abelian category such that any exact functor $F\colon\scr{A}\to\A$ into an abelian category $\A$ such that $F(\scr{S})=0$ factors through $Q_{\scr{S}}$.

A subcategory $\scr{S}\subseteq\scr{A}$ is \emph{localising} if it is Serre and closed under coproducts. In this case, the localisation $Q_{\scr{S}}\colon\scr{A}\to\scr{A}/\scr{S}$ admits a right adjoint $R_{\scr{S}}\colon\scr{A}/\scr{S}\to\scr{A}$. A localising subcategory $\scr{S}$ is of \emph{finite type} if $\scr{S}=\rlim(\scr{S}\cap\msf{fp}(\scr{A}))$; this is equivalent to $Q_{\scr{S}}$ preserving finitely presented objects, or equivalently $R_{\scr{S}}$ preserving direct limits (and thus being definable). In this case $\scr{A}/\scr{S}$ is itself locally coherent.
\end{slab}

Finite type localisations are of particular interest since they are closely related to the purity in the category. We shall only use this in the context of triangulated categories, so this is recalled below.

\begin{slab}[Definable subcategories]\label{prel:definable}
Let $R$ be a ring and $\D(R)$ its derived category. A full subcategory $\mc{D}\subseteq\D(R)$ is \emph{definable} if it is closed under products, pure subobjects and pure quotients. There is a bijection between definable subcategories of $\D(R)$ and finite type localisations of $\Mod{\D(R)^{\c}}$, or equivalently Serre subcategories of $\mod{\D(R)^{\c}}$. The bijections are given by
\[
\mc{D}\mapsto \mc{S}(\mc{D})=\{f\in\mod{\D(R)^{\c}}:\Hom(f,\y X)=0 \t{ for all }X\in\mc{D}\}
\]
and 
\[
\mc{S}\mapsto \mc{D}(\mc{S})=\{X\in\D(R):\Hom(f,\y X)=0 \t{ for all }f\in\mc{S}\}.
\]
For more details concerning this, see \cite{BWhs}.
\end{slab}

\section{Transfer of pure homological dimension}

For this section, there is a standing assumption that $F\colon\A\to\B$ is a definable functor between finitely accessible categories with products. Since any definable functor preserves pure exact sequences and pure injective objects, it is clear that $\msf{pid}(FX)\leq \msf{pid}(X)$ for any $X\in\A$. However, as the following result, which is arguably the technical core of the paper, shows, to get bounds on the pure global dimension it is more convenient to consider an additional assumption on $F$, namely that it has a left adjoint.

\begin{prop}\label{prop:pextisos}
Let $F\colon\A\to\B$ be a definable functor between finitely accessible categories with products, and suppose that $F$ admits a left adjoint $\Lambda\colon\B\to\A$. Then there are isomorphisms
\[
\mrm{PExt}_{\msf{B}}^{n}(B,F(A))\simeq \mrm{PExt}_{\A}^{n}(\Lambda(B),A)
\]
for all $A\in\A, B\in\B$ and $n\geq 0$.
\end{prop}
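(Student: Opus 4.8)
The plan is to compute both sides from a single pure projective resolution of $B$ in $\B$, transported into $\A$ by the left adjoint $\Lambda$. The two facts that make this work are that $\Lambda$ preserves pure projectives and pure exact sequences; once these are established, the degree-zero adjunction isomorphism does the rest. Note first that $\A$ and $\B$ have enough pure projectives, since every object is a filtered colimit of finitely presented objects and the canonical map from the coproduct of those objects is a pure epimorphism, so pure projective resolutions exist in both categories.

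First I would verify that $\Lambda$ sends pure projectives to pure projectives. Given $P\in\msf{Pproj}(\B)$ and a pure epimorphism $p\colon M\to N$ in $\A$, the functor $F$ carries $p$ to a pure epimorphism $F(p)\colon F(M)\to F(N)$, since definable functors preserve pure exact sequences. As $P$ is pure projective, $\Hom_\B(P,F(p))$ is surjective; by adjunction this map is identified with $\Hom_\A(\Lambda(P),p)$, so $\Lambda(P)$ is pure projective. Next I would check that $\Lambda$ preserves pure exactness: being a left adjoint it preserves all filtered colimits, and being additive it preserves split short exact sequences; since every pure exact sequence in a finitely accessible category is a filtered colimit of split exact sequences (see \cite{CBlfp, DAC}), it follows that $\Lambda$ carries pure exact sequences to pure exact sequences. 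With these in hand, choose a pure projective resolution $\cdots\to P_1\to P_0\to B\to 0$ in $\B$; applying $\Lambda$ and using the two preservation properties on each pure short exact piece shows that $\cdots\to\Lambda(P_1)\to\Lambda(P_0)\to\Lambda(B)\to 0$ is a pure projective resolution of $\Lambda(B)$ in $\A$. The adjunction isomorphism $\Hom_\A(\Lambda(-),A)\simeq\Hom_\B(-,F(A))$ is natural in the first variable, hence induces an isomorphism of cochain complexes $\Hom_\A(\Lambda(P_\bullet),A)\simeq\Hom_\B(P_\bullet,F(A))$; passing to cohomology yields the desired isomorphisms $\mrm{PExt}_\A^{n}(\Lambda(B),A)\simeq\mrm{PExt}_\B^{n}(B,F(A))$ for all $n\ge 0$.

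The genuinely delicate point is the preservation of pure exactness by $\Lambda$, which rests on the characterization of pure exact sequences as filtered colimits of split ones; everything else is formal manipulation of the adjunction and naturality of $\theta$ with respect to the differentials. A secondary bookkeeping issue is that I am computing $\mrm{PExt}$ from a pure projective resolution of the first variable rather than a pure injective resolution of the second, so I should record that these derived functors balance — which holds because, via the fully faithful definable embedding $d_\A\colon\A\to\mbb{D}(\A)$, the computation transports to $\mrm{Ext}$ in the locally coherent abelian category $\mbb{D}(\A)$, where enough projectives and injectives are present and balance is standard.
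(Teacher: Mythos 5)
Your argument is correct, but it takes a genuinely different route from the paper. The paper never touches pure projective resolutions directly: it transports everything into the locally coherent categories $\mbb{D}(\A)$ and $\mbb{D}(\B)$, extends $F$ to a definable $\bar{F}$ with an exact left adjoint $\lambda$, runs the Grothendieck spectral sequence for $\Hom_{\mbb{D}(\B)}(Y,-)\circ\bar{F}$ (which collapses because $\bar{F}$ is exact on fp-injectives), and then identifies $\lambda\circ d_{\B}\simeq d_{\A}\circ\Lambda$ by writing objects of $\mbb{D}(\A)$ as inverse limits of injectives. You instead do relative homological algebra in $\A$ and $\B$ themselves: you show $\Lambda$ preserves pure projectives (via the adjunction and $F$ preserving pure epimorphisms) and pure exact sequences (via the description of pure exact sequences as filtered colimits of split ones), so that $\Lambda$ of a pure projective resolution of $B$ is a pure projective resolution of $\Lambda(B)$, and conclude by naturality of the adjunction on Hom complexes. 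Your approach is more elementary and makes item (1) of \cref{cor:esssurj} an ingredient rather than a consequence; the paper's approach buys the identity $\lambda\circ d_{\B}\simeq d_{\A}\circ\Lambda$ (reused later) and, as noted in the remark following the proposition, an isomorphism $\mrm{PExt}_{\B}^{n}(B,F(A))\simeq\mrm{Ext}_{\mbb{D}(\A)}^{n}(\lambda(d_{\B}B),d_{\A}A)$ that survives even when $\Lambda$ does not exist. Two points you should make explicit: the characterisation of pure exact sequences as filtered colimits of split ones must be justified for finitely accessible categories with products, not just module categories (it holds, e.g.\ by pulling the sequence $0\to X\to Y\to Z\to 0$ back along the maps $C_{i}\to Z$ of a presentation $Z\simeq\rlim C_{i}$ with $C_{i}$ finitely presented, each pullback splitting by purity); and the balance of $\mrm{PExt}$, which you correctly reduce to ordinary balance of $\mrm{Ext}$ in $\mbb{D}(\A)$, is genuinely needed since the paper's definition and later uses run through pure injective resolutions in the second variable.
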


\begin{proof}
We make substantial use of the functor $d_{\A}\colon\A\to\mbb{D}(\A)$ introduced in \cref{prel:catDA}. Since $d_{\B}$ is definable, so is the composition $d_{\B}\circ F\colon \A\to\mbb{D}(\B)$. By \cite[UP 10.4]{krausefunlfp}, there is a functor $\bar{F}\colon\mbb{D}(\A)\to\mbb{D}(\B)$, which is unique up to natural isomorphism, that extends $F$ along $d_{\A}$, so $\bar{F}\circ d_{\A}\simeq d_{\B}\circ F$; furthermore, $\bar{F}$ is definable as $F$ is, and admits an exact left adjoint $\lambda\colon\mbb{D}(\B)\to\mbb{D}(\A)$. The isomorphism $\bar{F}\circ d_{\A}\simeq d_{\B}\circ F$ shows that $\bar{F}$ preserves injective objects.

Let us fix an object $Y\in\mbb{D}(\B)$ and consider the composition
\[
\Hom_{\mbb{D}(\B)}(Y,-)\circ F\colon\mbb{D}(\A)\to\ab
\]
which is, by the adjunction $\lambda\dashv\bar{F}$, naturally isomorphic to $\Hom_{\mbb{D}(\A)}(\lambda(Y),-)$. There is then, by for example \cite[Theorem 5.8.3]{weibel}, a Grothendieck spectral sequence
\[
\t{Ext}_{\mbb{D}(\B)}^{p}(Y,(\msf{R}^{q}\bar{F})(X))\Rightarrow\t{Ext}_{\mbb{D}(\A)}^{p+q}(\lambda(Y), X)
\]
for any $X\in\mbb{D}(\A)$.

Let us now suppose $A\in\A$ and we set $X=d_{\A}A$, which is an fp-injective object in $\mbb{D}(\A)$. In the category $\msf{fpInj}(\mbb{D}(\A))$ there is no distinction between an exact sequence and a pure exact sequence, and as the category is closed under pure quotients, it follows that, since $\bar{F}$ preserves pure exact sequences, the restriction $\bar{F}\vert\colon\msf{fpInj}(\mbb{D}(\A))\to\msf{fpInj}(\mbb{D}(\B))$ is exact. Consequently $\msf{R}^{p}\bar{F}(X)=0$ for all $p>0$ and the above spectral sequence collapses to give isomorphisms 
\[
\t{Ext}_{\mbb{D}(\B)}^{n}(Y,d_{\B}F(A))\simeq\t{Ext}_{\mbb{D}(\A)}^{n}(\lambda(Y), d_{\A}A)
\]
for all $A\in\A$. 

To conclude the proof, we must show that $\lambda\circ d_{\B}\simeq d_{\A}\circ\Lambda$. To this end, let $X\in\mbb{D}(\A)$ and using \cref{prel:bergman} write $X\simeq \llim_{J}d_{\A}X_{j}$ where $X_{j}\in\msf{Pinj}(\A)$. There are then isomorphisms
\begin{align*}
\Hom_{\mbb{D}(\A)}(d_{\A}\Lambda B, X) & \simeq \llim_{J}\Hom_{\mbb{D}(\A)}(d_{\A}\Lambda B, d_{\A}X_{j}) \\
&\simeq \llim_{J}\Hom_{\mbb{D}(\B)}(d_{\B}B,d_{\B}FX_{j}) & \mbox{by $d_{\A},d_{\B}$ being fully faithful and $\Lambda\dashv F$,} \\
&\simeq \Hom_{\mbb{D}(\B)}(d_{\B}B, \llim_{J} \bar{F}d_{\A}X_{j}) & \mbox{as $d_{\B}F\simeq \bar{F}d_{A}$,}\\
&\simeq \Hom_{\mbb{D}(\A)}(\lambda d_{\B}B,X) &\mbox{since $\bar{F}$ preserves limits and $\lambda\dashv \bar{F}$}.
\end{align*}
As this holds for all $X$, there is then a natural isomorphism $\lambda\circ d_{\B}\simeq d_{\A}\circ\Lambda$. The fact that, by definition, $\t{PExt}_{\A}^{i}(A,A')\simeq\t{Ext}_{\mbb{D}(\A)}^{i}(d_{\A}A,d_{\A}A')$ for all $A,A'\in \A$, the isomorphisms in the statement hold as claimed.
\end{proof}

\begin{rem}
Note that the existence of $\Lambda$ is only required in the above to give a more closed form of the statement. The adjoint $\lambda$ always exists, and thus one will always have the isomorphisms $\t{PExt}_{\B}^{n}(B,F(A))\simeq\t{Ext}_{\mbb{D}(\A)}^{n}(\lambda(d_{B}B), d_{\A}A)$, it is just not necessarily the case that $\lambda(d_{\B}B)$ is an fp-injective object, in other words one in the image of $d_{\A}$. The existence of $\Lambda$ ensures this occurs.
\end{rem}

Let us give some immediate corollaries.

\begin{cor}\label{cor:esssurj}
With the setup of \cref{prop:pextisos}, the following hold:
\begin{enumerate}
\item $\Lambda$ preserves pure projective objects;
\item if every object in $\B$ is a summand of an object in the image of $F$, then $\msf{pgdim}(\B)\leq \msf{pgdim}(\A)$.
\end{enumerate}
\end{cor}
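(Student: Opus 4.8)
The plan is to deduce both statements directly from the isomorphism $\mrm{PExt}_{\B}^{n}(B,F(A))\simeq\mrm{PExt}_{\A}^{n}(\Lambda(B),A)$ of \cref{prop:pextisos}, combined with the fact that the pure homological dimensions recalled earlier are faithfully encoded by the vanishing of the PExt bifunctors.

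For (1), I would use that an object $P$ is pure projective precisely when $\msf{ppd}(P)=0$, i.e. when $\mrm{PExt}^{n}(P,-)=0$ for all $n\geq 1$. Suppose $B\in\B$ is pure projective. Then $\mrm{PExt}_{\B}^{n}(B,F(A))=0$ for every $A\in\A$ and every $n\geq 1$. Transporting this across the isomorphism of \cref{prop:pextisos} gives $\mrm{PExt}_{\A}^{n}(\Lambda(B),A)=0$ for all $A\in\A$ and $n\geq 1$; since $A$ here ranges over all of $\A$, this is exactly the statement $\msf{ppd}(\Lambda(B))=0$, so $\Lambda(B)$ is pure projective.

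For (2), I would compute via $\msf{pgdim}(\B)=\sup\{\msf{pid}(B):B\in\B\}$ and bound each $\msf{pid}(B)$. Writing $d=\msf{pgdim}(\A)$, there is nothing to prove if $d=\infty$, so assume $d<\infty$. For any $C\in\B$, any $A\in\A$, and any $n>d$, \cref{prop:pextisos} gives $\mrm{PExt}_{\B}^{n}(C,F(A))\simeq\mrm{PExt}_{\A}^{n}(\Lambda(C),A)=0$, the vanishing on the right holding because both arguments lie in $\A$ and $n$ exceeds $\msf{pgdim}(\A)$. Hence $\msf{pid}(F(A))\leq d$ for every $A\in\A$. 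The hypothesis then lets me write an arbitrary $B\in\B$ as a direct summand of some $F(A)$; since $\mrm{PExt}_{\B}^{n}(-,-)$ is additive in its second variable, $\mrm{PExt}_{\B}^{n}(C,B)$ is a direct summand of $\mrm{PExt}_{\B}^{n}(C,F(A))$ and therefore vanishes for all $C\in\B$ and $n>d$. Thus $\msf{pid}(B)\leq d$ for every $B\in\B$, giving $\msf{pgdim}(\B)\leq\msf{pgdim}(\A)$.

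The only points requiring care are the two dictionary facts I lean on rather than the isomorphism itself: that pure projectivity, and a uniform bound on pure injective dimension, are detected by vanishing of the PExt functors (immediate from the definitions of $\msf{ppd}$, $\msf{pid}$ and $\msf{pgdim}$), and that PExt is additive in each variable so the summand decomposition of $B$ passes through. Neither is a genuine obstacle, so I expect the corollary to follow formally; the substantive work has already been carried out in the proof of \cref{prop:pextisos}.
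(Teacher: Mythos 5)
Your proposal is correct and follows essentially the same route as the paper: both parts are read off directly from the PExt isomorphism of \cref{prop:pextisos}, using the characterisation of pure projectivity by vanishing of $\mathrm{PExt}^{1}$ (the paper uses only $n=1$ where you use all $n\geq 1$) and reducing the bound on $\msf{pgdim}(\B)$ to objects in the image of $F$. The only difference is that you spell out the direct-summand/additivity step that the paper leaves implicit, which is a harmless elaboration.
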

\begin{proof}
For the first item, note that $B\in\B$ is pure projective if and only if $\t{PExt}_{\B}^{1}(B,-)=0$. The adjunction of \cref{prop:pextisos} with $n=1$ shows $\Lambda(B)$ is pure projective whenever $B$ is. For the second item, it is enough to test on object in the image of $F$. In this case, one has $\t{PExt}_{\B}^{n}(B,X)=\t{PExt}_{\B}^{n}(B, FA)\simeq \t{PExt}_{\A}^{n}(\Lambda(B),A)$, and thus if the latter is zero, so is the former.
\end{proof}

The existence of the left adjoint $\Lambda$ is not overly restrictive. Let us give some examples illustrating this.

\begin{ex}\label{ex:tridef}
Let $F\colon \T\to \U$ be a definable functor of compactly generated triangulated categories. Then, as well as the induced definable functor $\hat{F}\colon\Flat{\T^{\c}}\to\Flat{\U^{\c}}$ described in \cref{prel:deffunctors}, there is a (unique) definable functor $\bar{F}\colon\Mod{\T^{\c}}\to\Mod{\U^{\c}}$ such that $\bar{F}\vert_{\Flat{\T^{\c}}}=\hat{F}$. Furthermore, $\bar{F}$ admits a left adjoint $\Lambda\colon\Mod{\U^{\c}}\to\Mod{\T^{\c}}$. This is illustrated in \cite[\S 4.C]{bwdeffun}.

In relation to the above remark, note that $\Lambda$ will generally not restrict to flat objects even though $\bar{F}$ does, hence one needs to consider the adjunction as definable functors between locally coherent, rather than finitely accessible, categories.
\end{ex}

\begin{ex}\label{ex:restadj}
For a functor $f\colon \scr{R}\to\scr{S}$ of small preadditive categories, the adjoint pair $f^{!}\dashv f^{*}$ defined in \cref{prel:modules} satisfies the conditions of \cref{prop:pextisos}. 
\end{ex}

\begin{ex}\label{ex:ringmap}
As a particular example of \cref{ex:restadj}, a ring map $f\colon R\to S$ induces the adjoint pair $S\otimes_{R}-\dashv (-)_{R}$, where $(-)_{R}\colon\Mod{S}\to\Mod{R}$ is the restriction of scalars, which is definable as it also has a right adjoint. The isomorphisms of \cref{prop:pextisos} then give
\begin{equation}\label{eqn:ringmap}
\t{PExt}_{R}^{i}(M,(N)_{R})\simeq \t{PExt}_{S}^{i}(M\otimes_{R}S,N)
\end{equation}
for any $M\in\Mod{R}$ and $N\in\Mod{S}$.
\end{ex}

Recall that a ring map $f\colon R\to S$ is a \ti{ring epimorphism} if the restriction $(-)_{R}\colon\Mod{S}\to\Mod{R}$ is fully faithful (see \cite[\S 5.5.1]{psl} for more details). For such ring maps, it is straightforward obtain a relationship between their pure global dimension.

\begin{prop}\label{prop:ringepi}
Let $f\colon R\to S$ be a ring epimorphism. Then $\msf{pgdim}(S)\leq \msf{pgdim}(R)$.
\end{prop}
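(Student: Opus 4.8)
The plan is to feed the change-of-category isomorphism of \cref{ex:ringmap} the extra input coming from $f$ being a ring epimorphism, and read off the bound directly at the level of $\t{PExt}$. Recall that \eqref{eqn:ringmap} provides, for the definable functor $(-)_R\colon\Mod{S}\to\Mod{R}$ and its left adjoint $S\otimes_R-$, natural isomorphisms $\t{PExt}_R^i(M,(N)_R)\simeq\t{PExt}_S^i(M\otimes_R S,N)$ for all $M\in\Mod{R}$, $N\in\Mod{S}$ and $i\geq 0$.

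First I would record the standard reformulation of the hypothesis. Since $f$ is a ring epimorphism, $(-)_R$ is fully faithful, and as the right adjoint in $S\otimes_R-\dashv(-)_R$, its being fully faithful is equivalent to the counit being invertible. Concretely, the multiplication map furnishes a natural isomorphism $(L)_R\otimes_R S\simeq L$ for every $L\in\Mod{S}$; in particular every $S$-module lies, up to isomorphism, in the essential image of the left adjoint $-\otimes_R S$.

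With this in hand, for arbitrary $L,N\in\Mod{S}$ I would specialise the isomorphism above to $M=(L)_R$. The counit isomorphism identifies $M\otimes_R S=(L)_R\otimes_R S\simeq L$, and hence
\[
\t{PExt}_S^i(L,N)\simeq\t{PExt}_R^i\left((L)_R,(N)_R\right)
\]
for all $i\geq 0$. If now $i>\msf{pgdim}(R)$, the right-hand side vanishes for all $L,N$, so $\t{PExt}_S^i(-,-)=0$; taking the supremum of those $i$ for which $\t{PExt}_S$ is nonzero yields $\msf{pgdim}(S)\leq\msf{pgdim}(R)$.

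I do not expect a genuine obstacle here: the computation is essentially the one underlying \cref{cor:esssurj}(2), except that it is now the left adjoint $-\otimes_R S$ (rather than the definable functor $(-)_R$ itself) that is essentially surjective, which is exactly what the ring epimorphism hypothesis supplies. The only point meriting care is confirming that the counit isomorphism lets every $S$-module appear as the first argument $M\otimes_R S$, so that \eqref{eqn:ringmap} can be applied with both of its $\t{PExt}_S$-arguments allowed to be arbitrary $S$-modules.
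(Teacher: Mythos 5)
Your proposal is correct and follows essentially the same route as the paper: both identify the ring epimorphism hypothesis with the invertibility of the counit $(N)_R\otimes_R S\simeq N$ and then substitute $M=(N)_R$ into the isomorphism \eqref{eqn:ringmap} to obtain $\t{PExt}_S^i(L,N)\simeq\t{PExt}_R^i((L)_R,(N)_R)$, from which the inequality is immediate. No gaps.
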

\begin{proof}
The fact that $f$ is a ring epimorphism is equivalent to there being an isomorphism $N\simeq S\otimes_{R}(N)_{R}$ for any $N\in\Mod{S}$, see \cite[Proposition XI.1.2]{stenstrom}. As such, the isomorphisms of \cref{eqn:ringmap} give $\t{PExt}_{R}^{i}((M)_{R},(N)_{R})\simeq \t{PExt}_{S}^{i}(M,N)$ for all $M,N\in\Mod{S}$, from which it is immediate that $\msf{pgdim}(S)\leq \msf{pgdim}(R)$.
\end{proof}

We note that \cref{prop:ringepi} recovers \cite[Proposition 2.2 and Corollary 2.3]{BL}. We shall consider the analogous version of the above for homological ring epimorphisms when we consider applications of \cref{prop:pextisos} for derived categories.

In fact, the above is an example of when our standard adjunction $\Lambda\dashv F$, with $F$ definable, has one of the functors being fully faithful. In this generality, we have the following.

\begin{lem}\label{lem:localisation}
Let $F\colon \A\to \B$ be a definable functor with left adjoint $\Lambda$. 
\begin{enumerate}
    \item If $F$ is fully faithful, then $\msf{pgdim}(\A)\leq \msf{pgdim}(\B)$.
    \item If $\Lambda$ is fully faithful, then $\msf{pgdim}(\B)\leq \msf{pgdim}(\A)$.
\end{enumerate}
\end{lem}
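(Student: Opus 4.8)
The plan is to derive both inequalities directly from the change-of-category isomorphisms of \cref{prop:pextisos}, combined with the standard characterisation of fully faithfulness of an adjoint in terms of the unit and counit. Recall that for the adjunction $\Lambda\dashv F$, the counit $\epsilon\colon\Lambda F\to\mathrm{id}_\A$ is a natural isomorphism precisely when $F$ is fully faithful, while the unit $\eta\colon\mathrm{id}_\B\to F\Lambda$ is a natural isomorphism precisely when $\Lambda$ is fully faithful. I will also use the reformulation of pure global dimension as a vanishing condition: $\msf{pgdim}(\A)\leq d$ if and only if $\t{PExt}_\A^n(A_1,A_2)=0$ for all $A_1,A_2\in\A$ and all $n>d$, and likewise for $\B$.

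For the first item, assume $F$ is fully faithful, so $\Lambda F\simeq\mathrm{id}_\A$. Given arbitrary $A_1,A_2\in\A$, I would apply \cref{prop:pextisos} with $B=F(A_1)$ and $A=A_2$ to obtain
\[
\t{PExt}_\B^n\big(F(A_1),F(A_2)\big)\simeq\t{PExt}_\A^n\big(\Lambda F(A_1),A_2\big)\simeq\t{PExt}_\A^n(A_1,A_2),
\]
the last isomorphism coming from the counit being invertible. Hence if $n>\msf{pgdim}(\B)$ the left-hand side vanishes, forcing $\t{PExt}_\A^n(A_1,A_2)=0$ for all $A_1,A_2$; this gives $\msf{pgdim}(\A)\leq\msf{pgdim}(\B)$.

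For the second item, assume $\Lambda$ is fully faithful, so $\eta\colon B\to F\Lambda(B)$ is an isomorphism for every $B\in\B$. In particular every object of $\B$ lies, up to isomorphism, in the image of $F$, so the hypothesis of the second item of \cref{cor:esssurj} is met and the conclusion $\msf{pgdim}(\B)\leq\msf{pgdim}(\A)$ follows at once. Alternatively, and more directly, for arbitrary $B_1,B_2\in\B$ I would apply \cref{prop:pextisos} with $B=B_1$ and $A=\Lambda(B_2)$, which together with $F\Lambda(B_2)\simeq B_2$ yields
\[
\t{PExt}_\B^n(B_1,B_2)\simeq\t{PExt}_\B^n\big(B_1,F\Lambda(B_2)\big)\simeq\t{PExt}_\A^n\big(\Lambda(B_1),\Lambda(B_2)\big),
\]
whose right-hand side vanishes whenever $n>\msf{pgdim}(\A)$.

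Since \cref{prop:pextisos} supplies the only nontrivial input, neither part presents a genuine obstacle; the one point requiring care is the bookkeeping, namely correctly identifying which of the unit or counit is invertible in each case and substituting the right objects into the isomorphism so that the composite $\Lambda F$ or $F\Lambda$ collapses to the identity. Everything else is the routine translation of the vanishing of PExt into a bound on pure global dimension.
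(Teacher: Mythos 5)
Your argument is correct and is essentially the paper's own proof: both parts follow by specialising the isomorphisms of \cref{prop:pextisos} to $\t{PExt}_{\B}^{n}(FA_1,FA_2)\simeq\t{PExt}_{\A}^{n}(\Lambda FA_1,A_2)$ and $\t{PExt}_{\A}^{n}(\Lambda B_1,\Lambda B_2)\simeq\t{PExt}_{\B}^{n}(B_1,F\Lambda B_2)$, and then collapsing $\Lambda F$ or $F\Lambda$ to the identity via the standard unit/counit characterisation of fully faithful adjoints, which you identify correctly. The alternative route to item (2) via \cref{cor:esssurj} is a harmless extra.
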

\begin{proof}
By \cref{prop:pextisos}, there are isomorphisms
\[
\t{PExt}_{\B}^{n}(FM,FM')\simeq \t{PExt}_{\A}^{n}(\Lambda FM,M')
\]
and
\[
\t{PExt}_{\A}^{n}(\Lambda N,\Lambda N')\simeq \t{PExt}_{\B}^{n}(N,F\Lambda N')
\]
for all $M,M'\in\A$ and $N,N'\in \B$. Now, $F$ (respectively $\Lambda$) is fully faithful if and only if $\Lambda F\simeq \t{Id}_{\A}$ (respectively $F\Lambda\simeq \t{Id}_{\B}$), see \cite[Theorem IV.3.1]{maclane}. The claims are now immediate.
\end{proof}

\begin{ex}
In the mode of \cref{prel:localisation}, if $\scr{A}$ is a locally coherent category and $\mc{S}\subseteq\msf{fp}(\scr{A})$ is a Serre subcategory, then the induced localisation $Q_{\mc{S}}\colon \scr{A}\to\scr{A}/\rlim\mc{S}$ is of finite type, so its right adjoint preserves direct limits and is therefore definable. Consequently, we have that $\msf{pgdim}(\scr{A}/\rlim\mc{S})\leq \msf{pgdim}(\scr{A})$.

On the other hand, if $f\colon \scr{R}\to\scr{S}$ is a fully faithful functor between finitely accessible categories, then the adjunction $f^{!}\dashv f^{*}$ of \cref{prel:modules} has $f^{!}$ being fully faithful, and thus $\msf{pgdim}(\Mod{\scr{R}})\leq \msf{pgdim}(\Mod{\scr{S}})$.
\end{ex}

\begin{ex}
In certain situations where the setup of \cref{prop:pextisos} holds, the left adjoint $\Lambda\colon\B\to\A$ is itself a definable functor. For example, this occurs for \cref{ex:ringmap} when $f\colon R\to S$ realises $S$ as a finitely presented $R$-module. In this case, the adjunction $\Lambda\vdash F$ extends along $d_{\A}$ and $d_{\B}$ to give an adjunction $\bar{\Lambda}\dashv \bar{F}$. Indeed, if $\lambda\dashv \bar{F}$ is the exact left adjoint as in the proof of \cref{prop:pextisos}, then $\lambda\circ d_{\B}\simeq d_{\A}\circ \Lambda$ - this can be tested by using the uniqueness of left adjoints. Since $\lambda$ is exact, and thus left exact, and $\bar{\Lambda}$ is left exact, applying both functors the start of an injective resolution of any $X\in\mbb{D}(B)$, say $0\to X\to d_{\B}E_{1}\to d_{\B}E_{2}$, shows that $\lambda = \bar{\Lambda}$. In particular, $\bar{\Lambda}$ is an exact functor with adjoints on both sides.
\end{ex}

\begin{ex}
Let $\T_{1},\T_{2}$ and $\T_{3}$ be compactly generated triangulated categories, suppose that there is a sequence of definable functors $\Flat{\T_{1}^{\c}}\hookrightarrow\Flat{\T_{2}^{\c}}\twoheadrightarrow\Flat{\T_{3}^{\c}}$; this happens, for instance, when $\T_{1}$ is a smashing subcategory of $\T_{2}$ and $\T_{3}$ is the corresponding localisation, as a particular form of \cref{ex:tridef}. By \cref{lem:localisation}, we then have $\msf{pgdim}(\T_{1}),\msf{pgdim}(\T_{3})\leq \msf{pgdim}(\T_{2})$. Yet in general there is no way to provide an upper bound of $\msf{pgdim}(\T_{2})$ in terms of $\msf{pgdim}(\T_{1})$ and $\msf{pgdim}(\T_{3})$. For example, consider $\D(A)$ where $A$ is the path algebra of the Kronecker quiver over a field $k$; there is then a recollement $\D(k)\hookrightarrow\D(A)\twoheadrightarrow\D(k)$, see \cite[Remark 2.11]{AKLY}. As $A$ is hereditary, we have $\msf{pgdim}(\D(A))\in\{1,2\}$ by \cite[Corollay 11.33]{JL}, depending on whether $k$ is countable or uncountable, while $\msf{pgdim}(\D(k))=0$ in any circumstance.
\end{ex}

\subsection{Brown--Adams representability}
We now turn our attention to an application of the transfer of pure global dimension to compactly generated triangulated categories. From now on, this assumption of compact generation will be standing.

\begin{slab}[Brown categories]
A compactly generated triangulated category $\T$ is said to be \emph{Brown}, or satisfy \emph{Brown--Adams representability} provided that the restricted Yoneda embedding $\y\colon\T\to\Flat{\T^{\c}}$ is full and essentially surjective. 

In other words, $\T$ is Brown if and only if any cohomological functor $H\colon(\T^{\c})^{\op}\to\ab$ is of the form $\y X=\Hom_{\T}(-,X)\vert_{\T^{\c}}$ for some $X\in\T$, and if $\phi\colon\y X\to \y Y$ is a natural transformation, then there is a map $f\colon X\to Y$ inducing $\phi$.
\end{slab}

\begin{rem}\label{rem:exofbrown}
The original example of a Brown category is the homotopy category of spectra, 
shown in \cite{brown} and then more generally \cite{adams}. This was generalised by Neeman in \cite{neemantba}, to show that any compactly generated triangulated category with $\T^{\c}$ triangle equivalent to a category with a countable skeleton is also a Brown category. This includes, for example $\D(R)$ for $R$ a countable ring. Necessary and sufficient conditions for $\msf{StMod}(kG)$ to be Brown were determined in \cite[Theorem 4.7.1]{BenGna}.
\end{rem}

We shall consider failures of the derived category to be Brown, and the history of these negative results, in the next section.

\begin{slab}[Neeman's criterion]\label{neemancrit}
For our purposes, the most useful equivalent condition of $\T$ being a Brown category is due to Neeman \cite[Propositions 4.10, 4.11]{neemantba}, and Beligiannis \cite[Theorem 11.8]{belrhap} . This states that
\begin{quote}
\centering
$\T$ is a Brown category if and only if for all $f\in\Flat{\T^{\c}}$ one has $\msf{pd}(f)=1=\msf{id}(f)$, that is $\t{Ext}_{\T^{\c}}^{2}(f,-)=0=\t{Ext}_{\T^{\c}}^{2}(-,f)$.
\end{quote}
\end{slab}

We now show the following, which is our most general result about the functoriality of Brown--Adams representability.

\begin{thm}\label{thm:BAfunctoriality}
Let $\T$ and $\U$ be compactly generated triangulated categories and $F\colon\Flat{\T^{\c}}\to\Flat{\U^{\c}}$ a definable functor.
\begin{enumerate}
\item Suppose $F$ is essentially surjective up to direct summands, then if $\T$ is a Brown category so is $\U$.
\item Suppose $F$ is fully faithful, then if $\U$ is a Brown category so is $\T$.
\end{enumerate}
\end{thm}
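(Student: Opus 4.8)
Write $\A=\Flat{\T^\c}$ and $\B=\Flat{\U^\c}$, so that $\mbb{D}(\A)=\Mod{\T^\c}$ and $\mbb{D}(\B)=\Mod{\U^\c}$ are the associated locally coherent categories. The plan is to recast Brown--Adams representability as a statement about pure global dimension and then deduce each implication from a corresponding inequality. By Neeman's criterion \cref{neemancrit}, $\T$ is Brown if and only if $\msf{pgdim}(\A)\leq 1$, and likewise $\U$ is Brown if and only if $\msf{pgdim}(\B)\leq 1$: since $\A=\msf{fpInj}(\mbb{D}(\A))$ and $\t{PExt}^n_\A(-,-)=\t{Ext}^n_{\T^\c}(-,-)$ on fp-injective objects, the vanishing $\t{Ext}^2_{\T^\c}(-,f)=0$ for all $f\in\A$ is exactly the statement $\msf{pgdim}(\A)\leq 1$. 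It therefore suffices to establish
\[
\text{(1)}\ \msf{pgdim}(\B)\leq\msf{pgdim}(\A)\qquad\text{and}\qquad\text{(2)}\ \msf{pgdim}(\A)\leq\msf{pgdim}(\B).
\]
These are precisely the conclusions of \cref{cor:esssurj}(2) and \cref{lem:localisation}(1), with one caveat: those results assume that $F$ has a left adjoint, whereas here none is postulated. The heart of the argument is to remove this hypothesis.

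First I would descend to the locally coherent level. Although $F$ itself need not admit a left adjoint---$\A$ and $\B$ are only finitely accessible, not locally presentable---the remark following \cref{prop:pextisos} supplies what is needed: the extension $\bar{F}\colon\mbb{D}(\A)\to\mbb{D}(\B)$ is definable and always carries an exact left adjoint $\lambda\colon\mbb{D}(\B)\to\mbb{D}(\A)$, together with natural isomorphisms
\begin{equation}\tag{$\star$}
\t{PExt}^n_\B(B,FA)\simeq\t{Ext}^n_{\mbb{D}(\A)}(\lambda\, d_\B B,\ d_\A A)\qquad(A\in\A,\ B\in\B,\ n\geq 0).
\end{equation}
I would also record that $\bar{F}$ preserves injectives (as $\bar{F}d_\A\simeq d_\B F$ and $F$ preserves pure injectives), and that the pure injective dimension is computed honestly in $\mbb{D}(\A)$, namely $\msf{pid}_\A(A)=\t{injdim}_{\mbb{D}(\A)}(d_\A A)$, since an injective resolution of $d_\A A$ has fp-injective cosyzygies and hence descends to a pure injective resolution in $\A$.

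For (1), given $\msf{pgdim}(\A)=m$, I would use essential surjectivity up to summands to write any $B'\in\B$ as a summand of some $FA$, so that $\t{PExt}^{m+1}_\B(B,B')$ is a summand of $\t{PExt}^{m+1}_\B(B,FA)$. By $(\star)$ the latter equals $\t{Ext}^{m+1}_{\mbb{D}(\A)}(\lambda\, d_\B B, d_\A A)$, which vanishes because $\t{injdim}_{\mbb{D}(\A)}(d_\A A)=\msf{pid}_\A(A)\leq m$ kills $\t{Ext}^{m+1}_{\mbb{D}(\A)}(-,d_\A A)$ on every input. Hence $\msf{pgdim}(\B)\leq m$. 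For (2), full faithfulness of $F$ propagates to $\bar{F}$ by a limit argument: writing $W\simeq\llim_K d_\A W_k$ with $d_\A W_k$ injective (via \cref{prel:bergman}) and using that $\bar{F}$ preserves limits together with $\Hom_\B(FA',FW_k)\simeq\Hom_\A(A',W_k)$, one checks $\Hom_{\mbb{D}(\A)}(d_\A A',W)\simeq\Hom_{\mbb{D}(\B)}(\bar{F}d_\A A',\bar{F}W)$, that is, the counit $\lambda\bar{F}d_\A A'\to d_\A A'$ is an isomorphism. Feeding $B=FA'$ into $(\star)$ then yields $\t{PExt}^n_\B(FA',FA)\simeq\t{Ext}^n_{\mbb{D}(\A)}(d_\A A',d_\A A)=\t{PExt}^n_\A(A',A)$, whence $\msf{pgdim}(\A)\leq\msf{pgdim}(\B)$.

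The main obstacle is exactly the absence of a left adjoint to $F$: one cannot invoke \cref{cor:esssurj} and \cref{lem:localisation} off the shelf, and must instead replay their proofs through $\bar{F}$ and its adjoint $\lambda$. Concretely, the two points that require care are (i) the identification $\msf{pid}_\A(A)=\t{injdim}_{\mbb{D}(\A)}(d_\A A)$, needed in (1) to pass from vanishing of $\t{PExt}$ on fp-injective inputs to vanishing of $\t{Ext}$ on the possibly non-fp-injective object $\lambda\, d_\B B$; and (ii) the promotion of full faithfulness from $F$ to $\bar{F}$ in (2), which rests on the Bergman-type presentation of objects of $\mbb{D}(\A)$ as inverse limits of injectives and on $\bar{F}$ preserving those limits. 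Everything else is a direct transcription of the PExt isomorphisms already in hand.
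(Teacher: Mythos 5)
Your proof is correct, and it splits into one half that mirrors the paper and one half that genuinely diverges. For item (1) your route is essentially the paper's: the paper likewise extends $F$ to $\bar{F}\colon\Mod{\T^{\c}}\to\Mod{\U^{\c}}$, invokes the left adjoint that always exists at that level, and feeds the resulting isomorphisms $\t{Ext}^{n}_{\U^{\c}}(g,Ff)\simeq\t{Ext}^{n}_{\T^{\c}}(\Lambda g,f)$ into Neeman's criterion; the only cosmetic difference is that the paper works directly with $\t{Ext}^{2}_{\T^{\c}}(-,f)=0$ from \cref{neemancrit} and so never needs your identification $\msf{pid}_{\A}(A)=\t{injdim}_{\mbb{D}(\A)}(d_{\A}A)$, which you correctly supply (and which does hold, by the cosyzygy argument in the paper's discussion of pure resolutions). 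For item (2) the paper does something different and more elementary: it first uses the complete hereditary cotorsion pair $(\Flat{\T^{\c}},\msf{Cot}(\T^{\c}))$ to reduce the vanishing of $\t{Ext}^{2}_{\T^{\c}}(f,-)$ to flat inputs, and then observes that $F$, being exact on $\Flat{\T^{\c}}$ and fully faithful, induces an \emph{injection} $\t{Ext}^{2}_{\T^{\c}}(f,g)\hookrightarrow\t{Ext}^{2}_{\U^{\c}}(Ff,Fg)$ on Yoneda extensions, so no adjoint is needed at all. Your alternative --- proving via Bergman-type inverse limits of injectives that the counit $\lambda\bar{F}d_{\A}A'\to d_{\A}A'$ is invertible and then reusing the PExt isomorphisms --- is also valid (it is the same limit computation the paper performs in the proof of \cref{prop:hringepi}), and it buys slightly more, namely the full inequality $\msf{pgdim}(\A)\leq\msf{pgdim}(\B)$ in all degrees rather than just the degree-two vanishing; the paper's Yoneda-extension argument is shorter but is tailored to $\t{Ext}^{2}$.
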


\begin{proof}
For the first statement, we may extend $F$ to a definable functor $\bar{F}\colon\Mod{\T^{\c}}\to\Mod{\U^{\c}}$ which admits a left adjoint $\Lambda$, by \cite[Proposition 4.12]{bwdeffun}, hence we are in the setting of \cref{prop:pextisos}. Let $f\in\Flat{\T^{\c}}$ and $g\in\Mod{\U^{\c}}$. Then there are isomorphisms $\t{Ext}_{\T^{\c}}^{n}(\Lambda(g),f)\simeq\t{Ext}_{\U^{\c}}^{n}(g,F(f))$ for all $n\geq 0$, so if $\T$ is Brown, we have $\t{Ext}_{\U^{\c}}^{2}(g,F(f))=0$ by \cref{neemancrit}. The assumption of $F$ being essentially surjective up to direct summands now gives that $\t{Ext}_{\U^{\c}}^{2}(-,h)=0$ for all $h\in\Flat{\U^{\c}}$, hence $\U$ is Brown.

For the second statement, let $m\in\Mod{\T^{\c}}$ and consider the short exact sequence $0\to k\to h\to m\to 0$ in $\Mod{\T^{\c}}$, where $h\in\Flat{\T^{\c}}$ and $k\in\msf{Cot}(\T^{\c})$, which exists by \cref{ex:flatcotpair}. If $f\in\Flat{\T^{\c}}$, then applying $\Hom(f,-)$ to this exact sequence, and noting that $\t{Ext}_{\T^{\c}}^{i}(f,k)=0$ for all $i\geq 0$, gives isomorphisms $\t{Ext}_{\T^{\c}}^{i}(f,e)\simeq\t{Ext}_{\T^{\c}}^{i}(f,m)$ for all $i\geq 2$. As such, we see that 
\[
\t{Ext}_{\T^{\c}}^{2}(f,-)=0 \t{ iff }\t{Ext}_{\T^{\c}}^{2}(f,g)=0 \t{ for all }g\in\Flat{\T^{\c}},
\]
and as such, to determine if $\T$ is a Brown category, it suffices to show that $\t{Ext}_{\T^{\c}}^{2}(f,g)=0$ for all flat functors $f,g\in\Flat{\T^{\c}}$.

As $F$ is definable, it is exact on $\Flat{\T^{\c}}$, and thus, as it is also fully faithful, it induces an injective map $\t{Ext}_{\T^{\c}}^{2}(f,g)\to\t{Ext}_{\U^{\c}}^{2}(Ff,Fg)$, just by considering Yoneda extensions. As such, if $\t{Ext}_{\U^{\c}}^{2}(Ff,Fg)=0$, which occurs whenever $\U$ is Brown, we also have $\t{Ext}_{\T^{\c}}^{2}(f,-)=0$, which finishes the proof.
\end{proof}

\begin{ex}
\cref{thm:BAfunctoriality} can be used to give an alternative proof that if $\T$ is a Brown category and $\msf{L}$ is a smashing subcategory of $\T$, then the category $\T/\msf{L}$ is also a Brown category. Note that a localising subcategory $\msf{L}$ is smashing if and only if the right adjoint $R\colon\msf{T}/\msf{L}\to \msf{L}$ to the localisation functor preserves coproducts, which is equivalent to it being a triangulated definable functor, see \cite[Proposition 4.18]{bwdeffun}. 

In this setting the induced definable functor $F\colon\Mod{(\T/\msf{L})^{\c}}\to \Mod{\T^{\c}}$ corresponding to $R$ is fully faithful - this is because we may identify $F$ with restriction along $L\vert\colon\T^{\c}\to (\T/\msf{L})^{\c}$, where $L$ is the smashing localisation functor. It follows, see for instance \cite[Remark 4.3]{krsmash}, that $F$ is fully faithful. In particular, if $\T$ is a Brown category, we deduce that $\T/\msf{L}$ is, as claimed.

This is, of course, not a new result. A detailed account can be found in \cite[\S 4.3]{axiomatic}, and it can also be found in \cite[\S 11.4]{belrhap}.
\end{ex}

\section{Pure global dimensions of derived categories}
In this section we will use some of the already established results to consider the pure global dimension of derived categories of rings, and give some applications as to determining whether $\D(R)$ is a Brown category.

Our first result establishes a relationship between the pure global dimension of a module category and its derived category.

\begin{prop}\label{prop:pgdimineq}
Let $R$ be a ring. Then $\msf{pgdim}(\D(R))\geq \msf{pgdim}(R)$.
\end{prop}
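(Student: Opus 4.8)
The plan is to realise every $R$-module as the image, under a definable functor, of a flat functor on $\D(R)^{\c}$, and then invoke the observation recorded at the start of this section that a definable functor cannot raise pure injective dimension. Since by definition $\msf{pgdim}(\D(R))=\msf{pgdim}(\Flat{\D(R)^{\c}})=\sup\{\msf{pid}(A):A\in\Flat{\D(R)^{\c}}\}$, it is enough to produce, for each $N\in\Mod R$, a flat functor $A$ with $\msf{pid}_{\Mod R}(N)\le\msf{pid}_{\Flat{\D(R)^{\c}}}(A)$; taking the supremum over $N$ then gives $\msf{pgdim}(R)\le\msf{pgdim}(\D(R))$.

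The functor I would use is evaluation at the compact generator $R\in\D(R)^{\c}$. Regard $R$ as the one-object preadditive subcategory of $\D(R)^{\c}$ with $\End_{\D(R)}(R)=R$, and let $\msf{ev}_R\colon\Flat{\D(R)^{\c}}\to\Mod R$, $\theta\mapsto\theta(R)$, be the restriction along $\{R\}\hookrightarrow\D(R)^{\c}$, that is, the functor $f^{*}$ of \cref{prel:modules} restricted to flat functors. First I would check $\msf{ev}_R$ is definable: filtered colimits and products in $\Flat{\D(R)^{\c}}\subseteq\Mod{\D(R)^{\c}}$ are computed pointwise — for products this uses that fp-injective functors are closed under products — so evaluation at $R$ preserves both. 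The real point is the identification $\msf{ev}_R(\y(N[0]))\cong N$ for every $N$: the functor $\y(N[0])=\Hom_{\D(R)}(-,N[0])|_{\D(R)^{\c}}$ is cohomological, hence lies in $\Flat{\D(R)^{\c}}$, and its value at $R$ is $\Hom_{\D(R)}(R,N[0])=H^{0}(N[0])=N$. Equivalently, the identity of $\Mod R$ factors as $\Mod R\xrightarrow{\y(-[0])}\Flat{\D(R)^{\c}}\xrightarrow{\msf{ev}_R}\Mod R$, routing each module through a flat functor.

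With these two facts the proof closes at once: applying $\msf{pid}(\msf{ev}_R X)\le\msf{pid}(X)$ to $X=\y(N[0])$ gives
\[
\msf{pid}_{\Mod R}(N)=\msf{pid}_{\Mod R}\big(\msf{ev}_R\,\y(N[0])\big)\le\msf{pid}_{\Flat{\D(R)^{\c}}}\big(\y(N[0])\big)\le\msf{pgdim}(\D(R)),
\]
and taking the supremum over all $N\in\Mod R$ yields $\msf{pgdim}(R)\le\msf{pgdim}(\D(R))$.

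The step I expect to require the most care is the choice of homological dimension. One must run the comparison through pure injective dimension, not pure projective dimension: a definable functor preserves pure injective objects and pure exact sequences, so it sends a pure injective resolution of $X$ to one of $\msf{ev}_R X$ and hence cannot raise $\msf{pid}$, but it need not preserve pure projectives, so the dual bound on $\msf{ppd}$ is unavailable by the same soft argument. Beyond that, the only thing to verify is the definability of $\msf{ev}_R$ (the preservation of products). If instead one wants the sharper change-of-category statement at the level of $\mrm{PExt}$, one can feed $\msf{ev}_R$ into \cref{prop:pextisos}, using that the adjoint $\lambda$ always exists, to obtain $\mrm{PExt}^{n}_{\Mod R}(M,N)\cong\t{Ext}^{n}_{\Mod{\D(R)^{\c}}}(\lambda(d_{\Mod R}M),\y(N[0]))$, and then bound the right-hand side by noting that the injective resolution of the flat functor $\y(N[0])$ is pure, so that its injective dimension in $\Mod{\D(R)^{\c}}$ agrees with its pure injective dimension in $\Flat{\D(R)^{\c}}$.
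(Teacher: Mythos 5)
Your argument is correct. It is built on the same functor as the paper's proof --- evaluation at $R$, i.e.\ restriction along the one-object inclusion $i\colon R\hookrightarrow\D(R)^{\c}$ --- and on the same key identification $\msf{ev}_R(\y(N[0]))\cong N$; but where the paper routes the comparison through \cref{prop:pextisos} applied to the adjunction $i^{!}\dashv i^{*}$, obtaining the isomorphisms $\t{PExt}^{n}_{R}(M,X)\simeq\t{Ext}^{n}_{\D(R)^{\c}}(i^{!}M,\y X)$ and reading off the inequality of dimensions from them, you bypass that machinery entirely and use only the soft observation recorded at the start of the section, namely that a definable functor preserves pure exact sequences and pure injective objects and hence sends a pure injective resolution to a pure injective resolution, so cannot raise $\msf{pid}$. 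Your verification that $\msf{ev}_R$ is definable (filtered colimits and products are computed pointwise, the latter because $\Flat{\D(R)^{\c}}=\msf{fpInj}(\D(R)^{\c})$ is closed under products) is sound, and you are right that the comparison must run through $\msf{pid}$ rather than $\msf{ppd}$, since definable functors need not preserve pure projectives; since $\msf{pgdim}$ is the supremum of either dimension, this suffices. What the paper's heavier route buys is the precise change-of-category isomorphism of $\t{PExt}$ groups, which is reused immediately afterwards (e.g.\ in the Keller--Neeman example, where one needs $\t{Ext}^{n}_{R}(M,i^{*}F)\simeq\t{Ext}^{n}_{\D(R)^{\c}}(i^{!}M,F)$ for flat $M$, not merely an inequality of dimensions); what your route buys is a shorter, self-contained proof of the stated inequality that requires neither the spectral sequence behind \cref{prop:pextisos} nor any mention of the left adjoint $i^{!}$.
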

\begin{proof}
Consider the fully faithful functor $i\colon R\to \D(R)^{\c}$ which, by \cref{prel:modules}, induces an adjoint triple
\[
\begin{tikzcd}
\Mod{\D(R)^{\c}} \arrow[r, "i^{*}" description] \arrow[r, leftarrow, shift left = 2ex, "i^{!}"] \arrow[r, leftarrow, shift right = 2ex, swap, "i_{*}"]& \Mod{R}.  
\end{tikzcd}
\]
Since $i$ is fully faithful, so is $i^{!}$, and thus so is $i_{*}$, in other words the above diagram is the right hand side of a recollement. Now, consider $X\in\Mod{R}$, then viewing $X$ as an object of $\D(R)$, we obtain a functor $\y X\in\Flat{\T^{\c}}$. Applying \cref{prop:pextisos}, we obtain isomorphisms
\[
\t{PExt}_{R}^{n}(M,i^{*}\y X)\simeq \t{PExt}_{\D(R)^{\c}}^{n}(i^{!}M,\y X)\simeq \t{Ext}_{\D(R)^{\c}}^{n}(i^{!}M,\y X),
\]
where the last isomorphism holds because $\y X\in\Flat{\T^{\c}}=\msf{fpInj}(\T^{\c})$. Yet since $i^{*}(F)=F(R)$ for all $F\in\Mod{\T^{\c}}$, we have that $i^{*}(\y X)=\Hom_{\D(R)}(R,X)\simeq X$, and thus $\t{PExt}_{R}^{n}(M,X)\simeq\t{Ext}_{\D(R)^{\c}}^{n}(i^{!}M,\y X)$. Consequently, we see that $\msf{pid}(X\in\Mod{R})\leq \msf{id}(\y X)=\msf{pid}(X\in\D(R))$, and so $\msf{pgdim}(R)\leq \msf{pgdim}(\D(R))$.
\end{proof}

\begin{rem}
There are several remarks to make in relation to \cref{prop:pgdimineq}. 
Beligiannis first illustrated this inequality for hereditary rings in \cite[Proposition 12.8]{belrhap}. This was improved upon by Keller, Neeman and Christensen in \cite[Proposition 1.4]{ckn}, where the assumptions placed on $R$ being that it is coherent, and that every finitely presented $R$-module has finite projective dimension. 
\end{rem}

\begin{ex}[Keller and Neeman's example]
In \cite[\S 6]{neemantba}, Neeman presents a counterexample credited to Keller showing that $\D(R)$ need not be a Brown category. It is shown that if $R$ is a ring such that there is a flat $R$-module of (pure) projective dimension at least $2$, then $\D(R)$ cannot be a Brown category. Such a ring and module are $k[x,y]$ and $k(x,y)$, where $k$ is an uncountable field. 

A similar argument to the above also recovers this counterexample: note that $i^{!}$ preserves flat modules, and thus $\t{Ext}_{R}^{n}(M,i^{*}F)\simeq\t{Ext}_{\D(R)^{\c}}^{n}(i^{!}M,F)$ for any $M\in\Flat{R}$ and $F\in\Mod{\D(R)^{\c}}$, and as such, if $M$ does not have projective dimension at most one, $\D(R)$ is not Brown. 
\end{ex}

\begin{ex}[Finite dimensional local algebras]\label{ex:fdalgebras}
Let $k$ be an uncountable algebraically closed field and $A$ a finite dimensional local $k$-algebra. If $A$ is of finite representation type, then it is pure semisimple, meaning every $A$-module is pure injective, and thus $\msf{pgdim}(A)=0$. If $A$ is not of finite representation type, then $\msf{pgdim}(A)\geq 2$ by \cite[Proposition 5.3]{BBL}, and thus $\D(A)$ is not a Brown category by \cref{prop:pgdimineq}. 

It is not clear whether $A$ being pure semisimple implies that $\D(A)$ is Brown without an assumption of $A$ being hereditary. See \cite[\S 12]{belrhap} for this consideration.
\end{ex}

Let us now consider how pure global dimension of the derived category varies over certain ring epimorphisms. Recall that if $f\colon R\to S$ is a ring epimorphism, then it is \emph{homological} if $(-)_{R}\colon\D(S)\to\D(R)$ is fully faithful.

\begin{prop}\label{prop:hringepi}
Let $f\colon R\to S$ be a homological ring epimorphism, then $\msf{pgdim}(\D(S))\leq \msf{pgdim}(\D(R))$.
\end{prop}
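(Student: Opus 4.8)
The plan is to realise the restriction functor $G := (-)_R\colon\D(S)\to\D(R)$ as the localisation of $\D(R)$ at a smashing subcategory, and then to run it through the flat–functor formalism of \cref{lem:localisation} exactly as in the smashing example discussed immediately after that lemma. The point of recasting things this way is that the present statement is an \emph{upper} bound on $\msf{pgdim}(\D(S))$, so, unlike the lower bound of \cref{prop:pgdimineq}, it requires control over all of $\Flat{\D(S)^{\c}}$ rather than just the objects in the image of $\y$; this is what forces a fully faithful definable functor on the nose.

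First I would verify that $G$ is a definable functor of triangulated categories. It preserves products and coproducts, being simultaneously the right adjoint of $L:=S\otimes^{\mathbb{L}}_R-$ and the left adjoint of $\mathrm{RHom}_R(S,-)$. For preservation of pure triangles I would use that $L$ sends compacts to compacts, since $L(R)=S$ and $L$ is triangulated and coproduct-preserving: for a pure triangle $\Delta$ in $\D(S)$ and a compact $C\in\D(R)^{\c}$, the adjunction $L\dashv G$ identifies $\Hom_{\D(R)}(C,G-)$ with $\Hom_{\D(S)}(LC,-)$, and as $LC$ is compact, $\y_R(G\Delta)$ is again short exact. Hence $G$ is definable by \cite[Theorem 4.16]{bwdeffun}.

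The assumption that $f$ is a homological ring epimorphism says exactly that $G$ is fully faithful, equivalently that the counit $LG\to\mathrm{Id}_{\D(S)}$ is invertible. Since $G$ is then a fully faithful right adjoint whose left adjoint $L$ preserves coproducts, Bousfield localisation theory presents $L$ as the Verdier quotient $\D(R)\to\D(R)/\ker(L)\simeq\D(S)$; moreover $\ker(L)$ is a localising subcategory (as $L$ preserves coproducts), and it is \emph{smashing} precisely because the right adjoint $G$ of the quotient functor $L$ preserves coproducts. Thus $\D(S)$ is the localisation of $\D(R)$ at the smashing subcategory $\ker(L)$, and we are in the situation of the example following \cref{lem:localisation}. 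Concretely, $G$ induces a definable functor $\hat G\colon\Flat{\D(S)^{\c}}\to\Flat{\D(R)^{\c}}$ which the smashing structure makes fully faithful with the requisite adjoint, so \cref{lem:localisation}(1) gives $\msf{pgdim}(\D(S))\leq\msf{pgdim}(\D(R))$.

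The step I expect to be the main obstacle is the promotion of full faithfulness of $G$ on the triangulated level to full faithfulness of the induced $\hat G$ on the flat–functor categories. The bare isomorphisms of \cref{prop:pextisos} only compute $\mathrm{PExt}$ against objects of the form $\y Y$, which is insufficient for an upper bound on $\msf{pgdim}(\D(S))$; it is genuinely the smashing hypothesis — that \emph{both} adjoints of $G$ preserve coproducts — that upgrades the situation to a fully faithful definable functor on all flat functors, and hence makes \cref{lem:localisation} applicable. I would therefore spend most of the care establishing this smashing-localisation dictionary, the remaining bookkeeping being routine.
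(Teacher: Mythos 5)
Your argument is correct and reaches the same final step as the paper --- namely, exhibiting the induced functor $\hat G\colon\Flat{\D(S)^{\c}}\to\Flat{\D(R)^{\c}}$ as fully faithful and definable with a left adjoint, and then invoking \cref{lem:localisation}(1) --- but it gets there by a genuinely different route. You identify $\D(S)$ with the smashing localisation $\D(R)/\ker(S\otimes^{\mathbb{L}}_R-)$ (correct: the kernel is localising, and it is smashing because the right adjoint $(-)_R$ of the quotient preserves coproducts) and then outsource the full faithfulness of $\hat G$ to the smashing dictionary, i.e.\ to the identification of $\hat G$ with restriction along $L\vert\colon\D(R)^{\c}\to\D(S)^{\c}$ together with the Neeman--Thomason description of compacts in the quotient; this is exactly the mechanism the paper itself uses in the example following \cref{thm:BAfunctoriality}, citing \cite[Remark 4.3]{krsmash}. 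The paper's own proof of \cref{prop:hringepi} instead verifies full faithfulness of the induced functor directly: it writes $M\simeq\msf{colim}_I\,\y A_i$ with $A_i$ compact and $N\simeq\llim_J\y X_j$ with $X_j$ pure injective (Bergman's theorem, \cref{prel:bergman}), and uses that $F$ preserves all limits and colimits together with the full faithfulness of $\y$ on pairs (compact, pure injective) and of $(-)_R$. The trade-off is that your route leans on the localisation theory of smashing subcategories (which you correctly flag as the point requiring the most care, and which you assert rather than prove), while the paper's computation is self-contained and sidesteps the identification of $\D(S)^{\c}$ inside $\D(R)/\ker(L)$ entirely. Your diagnosis of why \cref{prop:pextisos} alone does not suffice --- that an upper bound on $\msf{pgdim}(\D(S))$ requires control of $\mrm{PExt}$ between arbitrary flat functors, hence genuine full faithfulness of $\hat G$ rather than of $G$ --- is the right one, and matches the role \cref{lem:localisation} plays in the paper.
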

\begin{proof}
If $f$ is homological then $\Phi(-):=(-)_{R}\colon\D(S)\to\D(R)$ is a product and coproduct preserving triangulated functor, which is therefore definable by \cite[Proposition 4.18]{bwdeffun}. Let $F\colon\Mod{\D(S)^{\c}}\to\Mod{\D(R)^{\c}}$ be the induced functor from \cref{ex:tridef} extending $\Phi$. First, note that $F$ has both a right and left adjoint, by \cite[Theorem 4.21]{bwdeffun}, and the left adjoint restricts to flat objects as $F$ is exact. Let $M,N\in\Mod{\D(S)^{\c}}$, and write $M\simeq \msf{colim}_{I}\,\y A_{i}$ as a colimit of projective objects and using \cref{prel:bergman}, write $N\simeq \llim_{J} \y X_{j}$ as an inverse limit of injective objects, where $A_{i}\in\D(S)^{\c}$ and $X_{j}$ are pure injective. We claim that $F$ is fully faithful. To prove this, observe there are isomorphisms
\begin{align*}
\Hom(FM,FN)&\simeq \Hom(F(\msf{colim}_{I}\,\y A_{i}), F(\llim_{J}\y X_{i})) 
\\ &\simeq \Hom(\msf{colim}_{I}\, F\y A_{i}, \llim_{J} F\y X_{j}) \\
&\simeq \lim_{I}\llim_{J}\Hom(\y \Phi A_{i},\y \Phi X_{j}) \\
&\simeq \lim_{I}\llim_{J}\Hom(A_{i},X_{j}) \simeq \Hom(M,N),
\end{align*}
where the second row isomorphism follows from the fact $F$ preserves all limits and colimits, the third follows from the fact that $F\y\simeq \y f_{*}$, the fourth row from the fact that both $\y$ and $f_{*}$ are fully faithful on these objects since $X_{j}$ is pure injective, as discussed in \cref{prel:puretri}. This proves the fully faithfulness of $F$, as claimed. As such, $F\colon\Flat{\D(S)^{\c}}\to\Flat{\D(R)^{\c}}$ is fully faithful and definable with a left adjoint, from which we apply \cref{lem:localisation} to deduce that $\msf{pgdim}(\D(S)):=\msf{pgdim}(\Flat{\D(S)^{\c}})\leq \msf{pgdim}(\Flat{\D(R)^{\c}})=\msf{pgdim}(\D(R))$, which finishes the proof.
\end{proof}

\subsection{von Neumann regular rings control Brown--Adams representability}

We now turn our attention to illustrating that, for a given ring, whether or not its derived category is a Brown category can be determined by ring theoretic properties of a von Neumann regular ring. To do this, we recall the relevant definitions, and investigate the derived categories, of von Neumann regular rings.

\begin{slab}[von Neumann regular rings]
A ring $R$ is \emph{von Neumann regular}, also called \emph{absolutely flat}, if for any $x\in R$ there is a $y\in R$ such that $x=xyx$; this is equivalent to $\Mod{R}=\Flat{R}$, or equivalently $\mod{R}=\msf{proj}(R)$. Any von Neumann regular ring is coherent on both sides, see \cite[Corollary 2.3.23]{psl}.

One can also provide a functorial description of von Neumann regular rings: $R$ is von Neumann regular if and only if $\mod{R}\simeq\mc{A}(R)$, where $\mc{A}(R)=\msf{fp}(\msf{fp}(R,\ab),\ab)$ is the free abelian category on $R$, see \cite[Proposition 10.2.38]{psl}. It is not difficult to see, simply by unravelling the definition, that $R$ is von Neumann regular if and only if the functor $d_{R}\colon\Flat{R}\to\mbb{D}(\Flat{R})$ of \cref{prel:catDA} is an equivalence.
\end{slab}

Before returning to the question of Brown categories, we consider some properties of the derived category of von Neumann regular rings, some of which will be useful for that question. 

\begin{slab}[A generating localisation]\label{freydloc}
Let $R$ be any ring and consider the subset of $\D(R)^{\c}$ given by $\scr{R}:=\{\Sigma^{i}R:i\in\Z\}$, and view it as a small preadditive category. As in \cref{prel:modules}, the fully faithful inclusion $i\colon \scr{R}\to\D(R)^{\c}$ induces a recollement
\[
\begin{tikzcd}[column sep = 1.5cm]
\scr{S} \arrow[r, hook]& \Mod{\D(R)^{\c}} \arrow[l, two heads, shift left = 1.5ex] \arrow[l, two heads, shift right = 1.5ex] \arrow[r, two heads, "Q" description] & \Mod{\scr{R}} \arrow[l, shift right = 1.5ex, hook']\arrow[l, shift left = 1.5ex, hook', "\rho"]
\end{tikzcd}
\]
where $Q(f)=f\circ i=f\vert_{\scr{R}}$, and $\scr{S}=\msf{ker}(Q)=\{f\in\Mod{\T^{\c}}:f(\Sigma^{i}R)=0 \t{ for all }i\in\Z\}$. Observe that since $\Hom_{\D(R)}(\Sigma^{i}R,\Sigma^{j}R)\simeq\delta_{i,j}R$, there is an equivalence $\Mod{\scr{R}}\simeq\prod_{\Z}\Mod{R}$, see \cite[\S 5]{gpzg}.

We call $Q$ the \emph{Freyd generating localisation}. To justify this, recall that Freyd's generating hypothesis for rings states \begin{quote}
If $f\colon C\to D$ is a morphism of perfect complexes such that
\begin{equation}\label{fgh}
0= \Hom_{\D(R)}(\Sigma^i R, f)\colon \Hom_{\D(R)}(\Sigma^i R,C)\to \Hom_{\D(R)}(\Sigma^i R,D)
\end{equation}
for all $i\in\Z$, then $f=0$.
\end{quote} 
\end{slab}

The justification for its naming is that the kernel of the generating localisation determines the holding of the generating hypothesis, as the next result shows.

\begin{prop}\label{prop:fghholds}
The generating hypothesis holds for $\D(R)$ if and only if $\scr{S}\cap\mod{\D(R)^{\c}}=0$. In particular, if $R$ is right coherent, then $\D(R)$ satisfies the generating hypothesis if and only if $R$ is von Neumann regular.
\end{prop}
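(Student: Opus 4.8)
The plan is to carry the generating hypothesis into the functor category $\Mod{\D(R)^{\c}}$ via the restricted Yoneda embedding and then read it off from $\scr{S}$. First I would record two translations for a morphism $f\colon C\to D$ of perfect complexes. The conditions \eqref{fgh}, taken over all $i\in\Z$, say exactly that $Q(\y f)=0$; since $Q$ is the exact localisation of \cref{freydloc} with kernel $\scr{S}$, this is equivalent to $\operatorname{im}(\y f)\in\scr{S}$. On the other hand, compact objects are pure projective, so by \cref{prel:puretri} the canonical map $\Hom_{\D(R)}(C,D)\to\Hom(\y C,\y D)$ is a bijection; hence $f=0$ if and only if $\y f=0$, i.e.\ if and only if $\operatorname{im}(\y f)=0$. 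Thus the generating hypothesis is precisely the assertion that every functor of the form $\operatorname{im}(\y f)$ that lies in $\scr{S}$ vanishes.

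The remaining point for the main equivalence is that the functors $\operatorname{im}(\y f)$, as $f$ ranges over morphisms of $\D(R)^{\c}$, are exactly the finitely presented objects of $\Mod{\D(R)^{\c}}$ up to isomorphism. Given $g\in\mod{\D(R)^{\c}}$, write $g=\operatorname{coker}(\y h)$ with $h\colon A\to B$ in $\D(R)^{\c}$ (using full faithfulness of Yoneda to realise the presenting map as $\y h$); completing $h$ to a triangle $A\xrightarrow{h}B\xrightarrow{p}C\to\Sigma A$ and using that $\y$ is homological, the sequence $\y A\to\y B\to\y C$ is exact, so $g\cong\operatorname{im}(\y p)$. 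Combined with the previous paragraph, the generating hypothesis holds if and only if no nonzero finitely presented functor lies in $\scr{S}$, that is $\scr{S}\cap\mod{\D(R)^{\c}}=0$.

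For the coherent case I would argue the two implications directly. If $R$ is von Neumann regular then $\mod{R}=\msf{proj}(R)$, and since $R$ is coherent the cocycles, coboundaries and cohomology of any perfect complex are finitely presented, hence flat, hence projective; therefore every perfect complex splits as $\bigoplus_{n}\Sigma^{-n}H^{n}$ with each $H^{n}$ finitely generated projective. A morphism $f$ inducing zero on all cohomology then vanishes, since $\Hom_{\D(R)}(\Sigma^{-m}H^{m},\Sigma^{-n}H^{n})=\operatorname{Ext}_{R}^{m-n}(H^{m},H^{n})$ is zero for $m\neq n$ (the source is projective) and is governed by $H^{m}(f)=0$ for $m=n$; so the generating hypothesis holds.

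The reverse implication is the main obstacle. Assuming $R$ is coherent but not von Neumann regular, there is a finitely presented non-projective module $M$, and I must produce a nonzero object of $\scr{S}\cap\mod{\D(R)^{\c}}$, equivalently a nonzero cohomology-annihilating morphism between perfect complexes. The mechanism I would use is to form the perfect complex $C=\operatorname{cone}(d)$ for a finite stretch $P_{1}\xrightarrow{d}P_{0}$ of a projective resolution of $M$, and to build a self-map of $C$ out of the non-split extension $0\to\operatorname{im}(d)\to P_{0}\to M\to 0$; a degree shift forces the induced maps on cohomology to vanish, while non-splitting keeps the map nonzero in $\D(R)$. The delicate part is guaranteeing nonvanishing uniformly, in particular when $M$ has infinite projective dimension (as for $R=k[x]/x^{2}$), where the cone construction is genuinely needed and no formality argument is available. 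Alternatively, since $R$ is von Neumann regular exactly when the canonical exact functor from the free abelian category $\mc{A}(R)$ to $\mod{R}$ is an equivalence, I would identify $\scr{S}\cap\mod{\D(R)^{\c}}$ with the kernel of this comparison functor, so that its nonvanishing for non-regular $R$ yields the failure of the generating hypothesis at once; establishing this identification is the real content of the reverse implication.
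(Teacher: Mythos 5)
Your first two paragraphs correctly establish the main equivalence, and in essentially the same way as the paper: the generating hypothesis is exactly the statement that $\operatorname{im}(\y f)\in\scr{S}$ forces $\operatorname{im}(\y f)=0$, and the functors $\operatorname{im}(\y f)$ exhaust $\mod{\D(R)^{\c}}$ by completing presenting maps to triangles. Your direct argument that von Neumann regularity implies the generating hypothesis (every perfect complex splits as a sum of shifted finitely generated projectives, so a map killing all cohomology is zero) is also sound, and is more explicit than the paper, which instead quotes the Garkusha--Prest theorem that the Freyd localisation $Q$ is an equivalence if and only if $R$ is von Neumann regular.

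The gap is the reverse implication for coherent rings, and you flag it yourself: you never actually produce a nonzero object of $\scr{S}\cap\mod{\D(R)^{\c}}$ from a finitely presented non-projective module $M$, nor do you establish the proposed identification of $\scr{S}\cap\mod{\D(R)^{\c}}$ with the kernel of the comparison functor out of the free abelian category $\mc{A}(R)$. Both mechanisms are left as declared obstacles (``the delicate part'', ``the real content''), so as written the second statement is unproved in one direction. The paper closes this direction with no explicit construction at all: right coherence of $R$ is equivalent to $Q$ being of finite type, whence $\scr{S}=\rlim(\scr{S}\cap\mod{\D(R)^{\c}})$; therefore $\scr{S}\cap\mod{\D(R)^{\c}}=0$ forces $\scr{S}=0$, i.e.\ $Q$ is an equivalence, which by the cited result happens exactly when $R$ is von Neumann regular. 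If you want a self-contained argument you would have to carry out your cone construction uniformly, including for modules of infinite projective dimension, which is essentially the Hovey--Lockridge--Puninski proof that the paper is deliberately bypassing; otherwise the finite-type localisation route is the efficient way to finish.
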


\begin{proof}
Let $\alpha\colon A\to B$ be a morphism in $\D(R)^{\c}$. Since $\y R[i]$ is projective in $\Mod{\D(R)^{\c}}$ for all $i\in\Z$, and there are isomorphisms $\Hom(\y R[i],\y A)\simeq \Hom_{\D(R)}(R[i],A)$, one obtains an isomorphism $\Hom(\y R[i],\msf{Im}\,\y\alpha)\simeq\msf{Im}\,\Hom_{\D(R)}(R[i],\alpha)$. Consequently, we see that $\msf{Im}\,\y\alpha\in\scr{S}$ if and only if $\Hom_{\D(R)}(R[i],\alpha)=0$ for all $i\in\Z$.

Therefore, if the generating hypothesis holds, we see that $\scr{S}\cap\mod{\D(R)^{\c}}=0$. On the other hand, if $\scr{S}\cap\mod{\D(R)^{\c}}=0$, the above isomorphism tells us that $\msf{Im}(\y \alpha)=0$, hence $\alpha=0$, showing the generating hypothesis holds. This proves the first statement.

The localisation functor $Q$ is of finite type if and only if $R$ is right coherent by \cite[Theorem 5.1]{gpinj}, and thus $\scr{S}=\rlim(\scr{S}\cap\mod{\D(R)^{\c}})$. This means that $Q$ is an equivalence if and only if $\scr{S}\cap\mod{\D(R)^{\c}}=0$, which we just saw is equivalent to the generating hypothesis holding. Yet $Q$ is an equivalence if and only if $R$ is von Neumann regular, as shown in \cite[Theorem 8.2]{gpzg}.
\end{proof}

\begin{rem}\label{rem:sgh}
 Notice we also immediately obtain that the \emph{strong generating hypothesis} holds, that is $H_{*}\colon\D(R)^{\c}\to\Mod{R}$ is fully faithful, for a right coherent ring $R$ if and only if $R$ is von Neumann regular. Indeed, if $R$ is von Neumann regular, then the Freyd localisation $Q\colon\Mod{\D(R)^{\c}}\to\prod_{\Z}\Mod{R}$, is an equivalence. Its left adjoint $i^{*}\colon\prod_{\Z}\Mod{R}\to\Mod{\D(R)^{\c}}$ is given by $(M_{i})_{\Z}\mapsto \y (\oplus_{\Z}M_{i}[-i])$, in view of the fact that it preserves flat objects and sends $R_{i}\mapsto \y R[-i]$. Since $\y\colon\D(R)\to\Mod{\D(R)^{\c}}$ is fully faithful when the first component is compact, as discussed in \cref{prel:puretri}, we deduce that $\Hom_{\D(R)}(P,Q)\simeq \Hom(\y P,\y Q)\simeq \Hom(i^{*}(\oplus_{\Z}H_{i}(P)[-i]),i^{*}(\oplus_{\Z}H_{j}(Q)[-j]))\simeq \Hom_{R}(H_{*}P,H_{*}Q)$.

 Of course, neither of these results is new. That the generating hypothesis holds for a right coherent ring if and only if the ring is von Neumann regular is \cite[Corollary 3.2]{HLP}, while the fact that the strong generating hypothesis holds if and only if the generating hypothesis holds is \cite[Theorem 1.3]{HLP}. However, the functor categorical approach via the generating localisation differs from, and is much shorter than, the original proofs given at the references.
\end{rem}

Let us say a bit more about derived categories of von Neumann regular rings. Recall from \cite{krcqsl} that, for a compactly generated triangulated category $\T$, a Serre subcategory $\mc{S}\subseteq\mod{\T^{\c}}$ is \emph{perfect} if the right adjoint $\rho_{\mc{S}}\colon\Mod{\T^{\c}}/\rlim\mc{S}\to\Mod{\T^{\c}}$ to the localisation $Q_{\mc{S}}\colon\Mod{\T^{\c}}\to\Mod{\T^{\c}}/\rlim\mc{S}$ is exact.

\begin{prop}\label{prop:telescope}
Let $R$ be a von Neumann regular ring, then every Serre subcategory of $\mod{\D(R)^{\c}}$ is perfect. Consequently, there is a bijection between
\begin{enumerate}
\item thick subcategories of $\D(R)^{\c}$;
\item Serre subcategories of $\mod{R}$;
\item $\Sigma$-invariant Serre subcategories of $\mod{\D(R)^{\c}}$;
\item smashing localisations of $\D(R)$;
\end{enumerate}
and all these are in bijection with two-sided ideals of $R$. In particular, the telescope conjecture holds in $\D(R)$.
\end{prop}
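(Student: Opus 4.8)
The plan is to exploit the Freyd generating localisation of \cref{freydloc}: since $R$ is von Neumann regular, \cref{prop:fghholds} (together with \cite[Theorem 8.2]{gpzg}) shows that $Q\colon\Mod{\D(R)^{\c}}\to\prod_{\Z}\Mod{R}$ is an equivalence. As $Q$ is an equivalence it restricts to an equivalence between finitely presented objects, and the finitely presented objects of $\prod_{\Z}\Mod{R}$ are precisely the finite-support tuples of finitely presented modules; hence $\mod{\D(R)^{\c}}\simeq\bigoplus_{\Z}\mod{R}$, under which $\Sigma$ acts as the shift of the $\Z$-grading. Because $\Hom_{\D(R)}(\Sigma^{i}R,\Sigma^{j}R)\simeq\delta_{ij}R$, this is a block decomposition with no nonzero morphisms, and hence no extensions, between distinct degrees. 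I would first record the elementary observation that $\mod{R}=\msf{proj}(R)$ consists of projective objects, so every short exact sequence in $\mod{R}$ splits; consequently a Serre subcategory of $\mod{R}$ is the same thing as an additive subcategory closed under direct summands. Combining this with the block decomposition, a Serre subcategory of $\mod{\D(R)^{\c}}$ is exactly a $\Z$-indexed family $(\mc{S}_{i})_{i\in\Z}$ of summand-closed additive subcategories of $\mod{R}$, and it is $\Sigma$-invariant if and only if all the $\mc{S}_{i}$ agree with a single $\mc{S}\subseteq\mod{R}$; this gives the bijection between (2) and (3).

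Next I would prove perfectness, which by the block decomposition reduces to the single-block statement that every Serre subcategory of $\mod{R}$ is perfect. Here I would set up the bijection between summand-closed additive subcategories $\mc{S}\subseteq\msf{proj}(R)$ and two-sided ideals of $R$ via trace ideals, sending $\mc{S}$ to $\sum_{P\in\mc{S}}\tau(P)$, where $\tau(P)=\sum_{g\colon P\to R}g(P)$, and sending a two-sided ideal $I$ to $\{P\in\msf{proj}(R):\tau(P)\subseteq I\}=\{P:PI=P\}$; this uses that over a von Neumann regular ring every two-sided ideal is idempotent and is the directed union of the trace ideals of the finitely generated projectives it contains. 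The crucial point is that, for $\mc{S}\leftrightarrow I$, the finite type localisation attached to $\mc{S}$ is the one associated to the quotient map $R\to R/I$: since every $R$-module is flat, $R/I$ is flat, so $R\to R/I$ is a flat ring epimorphism, its restriction of scalars $\Mod{R/I}\to\Mod{R}$ is fully faithful and exact, and it identifies $\Mod{R/I}$ with $\Mod{R}/\rlim\mc{S}$ as a perfect localisation with exact section $\rho_{\mc{S}}$. This simultaneously establishes that every Serre subcategory of $\mod{\D(R)^{\c}}$ is perfect and the bijection between (2) and two-sided ideals of $R$.

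For the remaining identifications I would argue as follows. The observation that $\mod{R}$ is split propagates to $\D(R)^{\c}$: every perfect complex has finitely generated projective homology, and a bounded complex of projectives with projective homology splits as a finite direct sum of shifts of its homology modules. Hence a thick subcategory $\mc{C}\subseteq\D(R)^{\c}$ is determined by its degree-zero part $\mc{C}\cap\mod{R}$; the mapping-cone computation shows this part is closed under kernels and cokernels, and since every sequence in $\mod{R}$ splits it is exactly a summand-closed additive subcategory, i.e. a Serre subcategory of $\mod{R}$. Conversely such a subcategory generates a thick subcategory by taking all finite sums of shifts, giving the bijection between (1) and (2). Finally, invoking the theory of \cite{krcqsl} through \cref{prel:definable}, smashing localisations of $\D(R)$ correspond to the $\Sigma$-invariant Serre subcategories of $\mod{\D(R)^{\c}}$ that are perfect; as we have shown all Serre subcategories are perfect, this gives the bijection between (3) and (4). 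The composite bijection between (1) and (4) says precisely that every smashing subcategory of $\D(R)$ is generated by compact objects, which is the telescope conjecture.

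The main obstacle I expect is the perfectness/ideal step of the second paragraph: verifying that the trace-ideal assignment is genuinely a bijection onto all two-sided ideals and that the torsion class $\rlim\mc{S}$ is exactly $\{M:M=MI\}$, so that the localisation really is the flat ring epimorphism $R\to R/I$. A secondary point needing care is that the abstract correspondence between (3) and (4) drawn from \cite{krcqsl} must be checked to be compatible with the assignment $\mc{C}\mapsto\Loc(\mc{C})$, so that the composite bijection between (1) and (4) is literally the telescope statement and not merely a matching of the two classifications.
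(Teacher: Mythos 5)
Your overall architecture is sound and the bijections (1)--(4) are established essentially as in the paper: the identification $\mod{\D(R)^{\c}}\simeq\bigoplus_{\Z}\mod{R}$ with $\Sigma$ acting by shift, the splitting of perfect complexes into shifts of their homology for (1)$\iff$(2), and Krause's correspondence between $\Sigma$-invariant perfect Serre subcategories and smashing localisations for (3)$\iff$(4) all match the paper's route (the paper phrases (1)$\iff$(2) via Hovey's thick--wide correspondence plus the observation that wide equals Serre in a split abelian category, which is the same content as your direct splitting argument). Where you genuinely diverge is the perfectness step, and here the paper's argument is dramatically shorter and sidesteps what you correctly identify as your main obstacle. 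The paper simply observes that since every finitely presented $R$-module is projective, every finitely presented object of $\Mod{\D(R)^{\c}}\simeq\prod_{\Z}\Mod{R}$ is projective (projectivity is detected componentwise), whence $\t{Ext}^{2}_{\D(R)^{\c}}(\mc{S},-)=0$ for any Serre subcategory $\mc{S}$, and exactness of $\rho_{\mc{S}}$ follows from the identification of the quotient category with $\mc{S}^{\perp_{0,1}}$. No trace ideals, no identification of the torsion class $\rlim\mc{S}$ with $\{M:MI=M\}$, and no flat ring epimorphism $R\to R/I$ are needed. Your route buys a self-contained proof of the bijection with two-sided ideals (which the paper instead outsources to Herzog's Theorem 4.6), but at the cost of the unverified claim that $\rlim\mc{S}=\{M:MI=M\}$ and that the quotient is literally $\Mod{R/I}$; this is true and closable (it is essentially Herzog's classification combined with the standard Gabriel-topology description of flat epimorphic localisations), but if you only want perfectness you should use the $\t{Ext}^{2}$-vanishing argument, which makes the entire second paragraph of your proposal unnecessary for the statement as given. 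Your secondary worry about compatibility of the (3)$\iff$(4) correspondence with $\mc{C}\mapsto\Loc(\mc{C})$ is legitimate but is already built into Krause's framework in \cite{krcqsl}, which the paper invokes without further comment.
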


\begin{proof}
As $R$ is von Neumann regular if and only if the Freyd localisation $Q$ is an equivalence, we see that, since every finitely presented object of $\Mod{R}$ is projective, every finitely presented object of $\Mod{\D(R)^{\c}}\simeq\prod_{\Z}\Mod{R}$ is also projective. This is because an object in this category is projective if and only if each component is projective. As such, if $\mc{S}\subseteq\mod{\D(R)^{\c}}$, we have $\t{Ext}_{\D(R)^{\c}}^{2}(\mc{S},-)=0$ which means $\rho_{\mc{S}}$ is exact via the identification $\rho_{\mc{S}}\colon\Mod{\D(R)^{\c}}/\rlim\mc{S}\xrightarrow{\simeq} \mc{S}^{\perp_{0,1}}$.

We now establish the bijections. Over any coherent ring, if $\msf{L}\subset\D(R)^{\c}$ is thick, then $\mc{W}_{\msf{L}}=\{H_{0}(X):X\in\msf{L}\}$ is a wide subcategory of $\mod{R}$, and every wide subcategory of $\mod{R}$ arises this way, see \cite{Hovey}. As $R$ is von Neumann regular, if $X\in\D(R)$ then $X\simeq \oplus_{i\in\Z}H_{i}(X)[-i]$ by \cite[Theorem 8.2]{gpzg}, hence $\msf{L}=\{X:H_{i}(X)\in\mc{W}_{\msf{L}} \t{ for all }i\in\Z\}$, which establishes a bijection between thick subcategories of $\D(R)^{\c}$ and wide subcategories of $\mod{R}$. Yet as $R$ is von Neumann regular every short exact sequence in $\mod{R}$ is split, and thus a subcategory of $\mod{R}$ is wide if and only if it is Serre. This give $(1)\iff (2)$.

The bijection between Serre subcategories of $\mod{R}$ and $\Sigma$-invariant Serre subcategories of $\mod{\D(R)^{\c}}$ is immediate from the equivalence $\Mod{\D(R)^{\c}}\simeq\prod_{\Z}\Mod{R}$. In particular, any $\Sigma$-invariant Serre subcategory of $\mod{\D(R)^{\c}}$ is $(\mc{S})_{i\in\Z}$, for $\mc{S}\subset\mod{R}$. This gives $(2)\iff(3)$

The bijection between $\Sigma$-invariant perfect Serre subcategories and smashing localisations is due to Krause \cite[\S 12]{krcqsl}. We have seen every Serre subcategory of $\mod{\D(R)^{\c}}$ is perfect, so this bijection is really between $\Sigma$-invariant Serre subcategories and smashing localisations. This gives $(3)\iff (4)$.

The bijection with two-sided ideals of $R$ is found at \cite[Theorem 4.6]{herzog}; and the telescope conjecture is precisely the equivalence of $(1)$ and $(4)$.
\end{proof}

The proof of $(1)\iff (2)$ in the above proposition actually holds in more generality, noting that homology preserves all coproducts and products. Consequently, we also have the following.

\begin{cor}\label{prop:loccolocbij}
Let $R$ be a von Neumann regular ring. Then the assignment $\msf{L}\mapsto H_{0}(\msf{L})$ gives a bijection between localising (resp. colocalising) subcategories of $\D(R)$ and wide subcategories of $\Mod{R}$ closed under coproducts (resp. products).
\end{cor}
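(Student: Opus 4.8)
The plan is to show that the proposition is essentially the homological part of the argument for $(1)\iff(2)$ in \cref{prop:telescope}, stripped of every finiteness hypothesis, so that it applies to localising and colocalising subcategories rather than just thick ones. The key structural input, already used in the proof of \cref{prop:telescope}, is that for a von Neumann regular ring $R$ every object $X\in\D(R)$ splits as $X\simeq\bigoplus_{i\in\Z}H_{i}(X)[-i]$ by \cite[Theorem 8.2]{gpzg}; in particular the homology functor $H_{0}\colon\D(R)\to\Mod{R}$ is essentially surjective, and a subcategory of $\D(R)$ is determined by the collection of homologies of its objects, with the shift $\Sigma$ permuting the $H_{i}$.

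First I would treat the localising case. Given a localising subcategory $\msf{L}\subseteq\D(R)$, I would set $\mc{W}=H_{0}(\msf{L})\subseteq\Mod{R}$ and check that $\mc{W}$ is wide and closed under coproducts: since $H_{0}$ is exact on pure (indeed all) triangles and $R$ is von Neumann regular every short exact sequence in $\Mod{R}$ splits, kernels and cokernels of maps in $\mc{W}$ are realised as homologies of objects built from $\msf{L}$ using the splitting above, while coproducts pass through $H_{0}$ because $H_{0}$ preserves coproducts. Conversely, given $\mc{W}\subseteq\Mod{R}$ wide and closed under coproducts, I would define $\msf{L}(\mc{W})=\{X\in\D(R):H_{i}(X)\in\mc{W}\text{ for all }i\in\Z\}$ and verify it is localising, using that the splitting $X\simeq\bigoplus_i H_i(X)[-i]$ lets one check the triangulated and coproduct closure conditions degreewise in $\Mod{R}$. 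The two assignments are mutually inverse precisely because of the degreewise splitting, which guarantees $H_0(\msf{L}(\mc{W}))=\mc{W}$ and $\msf{L}(H_0(\msf{L}))=\msf{L}$.

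The colocalising case is formally dual: I would run the same argument replacing coproducts by products throughout, again using that $H_{0}$ preserves products (this is exactly the observation flagged in the remark preceding the corollary, that homology preserves all coproducts and products), and that the splitting $X\simeq\bigoplus_i H_i(X)[-i]$ for a von Neumann regular ring also behaves well under products since in this setting every module is flat and the relevant limits are computed degreewise. Thus wide subcategories of $\Mod{R}$ closed under products correspond to colocalising subcategories of $\D(R)$ via the same $\msf{L}\mapsto H_{0}(\msf{L})$.

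The main obstacle I anticipate is purely bookkeeping rather than conceptual: one must confirm that ``wide'' and ``Serre'' genuinely coincide in $\Mod{R}$ for $R$ von Neumann regular (so that the closure-under-extensions clause is automatic) — this was already observed in the proof of \cref{prop:telescope}, where the splitting of short exact sequences in $\mod{R}$ forces wide $=$ Serre, and the same splitting holds for all short exact sequences in $\Mod{R}$. Once that is in hand, the only care needed is to verify the coproduct (resp.\ product) closure is preserved in both directions of the bijection, which follows from $H_0$ preserving the relevant (co)limits together with the degreewise splitting; no additional finiteness is used, which is exactly why the argument extends beyond the thick/compact setting of \cref{prop:telescope}.
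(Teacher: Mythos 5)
Your proposal is correct and follows essentially the same route as the paper, which simply observes that the proof of $(1)\iff(2)$ in \cref{prop:telescope} carries over verbatim once one notes that homology preserves all coproducts and products and that the splitting $X\simeq\bigoplus_{i}H_{i}(X)[-i]$ holds for arbitrary objects of $\D(R)$. The only point worth making explicit in the colocalising case is that $\bigoplus_{i\in\Z}H_{i}(X)[-i]$ agrees with $\prod_{i\in\Z}H_{i}(X)[-i]$, since only one summand contributes in each degree, so the splitting is equally available to subcategories closed under products.
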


We not that the telescope conjecture holding for $\D(R)$ for $R$ von Neumann regular was proved, using completely different methods, in \cite[Corollary 3.12]{bazzhrb}.

\begin{ex}
Let us illustrate how \cref{prop:telescope} can be used to completely describe smashing localisations of $\D(R)$ for $R$ von Neumann regular. Let $\mc{S}\subseteq\mod{R}$ be a Serre subcategory. Then, by \cite[Theorem 4.6]{herzog} there is a two sided ideal $J\subseteq R$ such that $\mc{S}=\{P\in\mod{R}:P\in\msf{add}(J)\}$. It is clear that the localisation functor associated to $\mc{S}$ is given by extension along $R\to R/J$, and so we may associate $\mod{R/J}\simeq\mod{R}/\mc{S}$. Since $R/J$ is von Neumann regular, there is an equivalence of categories $\Mod{\D(R/J)^{\c}}\simeq\prod_{i\in\Z}\Mod{R/J}$.

Now, let $\scr{L}$ be a smashing localisation of $\D(R)$; then, as we have established the telescope conjecture holding, there is a two sided ideal $J\subseteq R$ such that
$\scr{L}$ consists of the objects of $\D(R)$ that vanish under the composition
\[
\D(R)\xrightarrow{\y}\Mod{\scr{P}}\simeq \prod_{\Z}\Mod{R}\xrightarrow{\prod_{\Z}(-\otimes R/J)}\prod_{\Z}\Mod{R/J}.
\]
Yet these are none other than the objects of $\D(R)$ which vanish under $-\otimes_{R}R/J\colon\D(R)\to\D(R/J)$. In other words, $\D(R)/\scr{L}\simeq\D(R/J)$.
\end{ex}

The following illustrates that the proof strategy of \cref{prop:telescope} is not generalisable beyond von Neumann regular rings, and shows that von Neumann regular rings are uniquely determined by the kernel of the Freyd localisation being perfect.

\begin{prop}
Let $R$ be a right coherent ring. Then the following are equivalent:
\begin{enumerate}
    \item Every $\Sigma$-invariant definable subcategory of $\D(R)$ is triangulated.
    \item Every $\Sigma$-invariant Serre subcategory of $\mod{R}$ is perfect;
    \item The Freyd subcategory $\scr{S}\cap\mod{\D(R)^{\c}}=\{f\in\mod{\D(R)^{\c}}:f(\Sigma^{i}R)=0 \t{ for all }i\in\Z\}$ is perfect;
    \item $R$ is von Neumann regular;
\end{enumerate}
\end{prop}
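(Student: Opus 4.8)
First I would treat von Neumann regularity (4) as the pivot. The implication (4)$\Rightarrow$(3) is immediate: for von Neumann regular $R$ the Freyd localisation $Q$ is an equivalence by \cref{prop:fghholds}, so $\scr{S}=0$ and the Freyd subcategory is the zero (hence perfect) subcategory. Likewise (4)$\Rightarrow$(2) is contained in \cref{prop:telescope}, where every Serre subcategory of $\mod{\D(R)^{\c}}$ is shown to be perfect. All the content therefore lies in a converse, which I would extract as (3)$\Rightarrow$(4).

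The first real step is to see that (3) is exactly the exactness of a right Kan extension. Because $R$ is right coherent, the Freyd localisation $Q=i^{*}$ is of finite type (as in \cref{prop:fghholds}, via \cite[Theorem 5.1]{gpinj}), so $\scr{S}=\rlim(\scr{S}\cap\mod{\D(R)^{\c}})$ and the localisation $Q_{\scr{S}\cap\mod{\D(R)^{\c}}}$ appearing in the definition of perfectness coincides with $Q$; its right adjoint is then the functor $i_{*}\colon\Mod{\scr{R}}\to\Mod{\D(R)^{\c}}$. Thus (3) says precisely that $i_{*}$ is exact. The second step is to compute $i_{*}$. Using $\Mod{\scr{R}}\simeq\prod_{\Z}\Mod{R}$ and $i^{*}\Hom_{\D(R)^{\c}}(-,X)\simeq(\Hom_{\D(R)}(\Sigma^{j}R,X))_{j\in\Z}$, the adjunction $i^{*}\dashv i_{*}$ together with Yoneda gives, for $M=(M_{j})_{j}$ and $X\in\D(R)^{\c}$,
\[
(i_{*}M)(X)\simeq\prod_{j\in\Z}\Hom_{R}(\Hom_{\D(R)}(\Sigma^{j}R,X),M_{j}).
\]
Evaluating at each $X$ on a componentwise short exact sequence in $\prod_{\Z}\Mod{R}$, I find that $i_{*}$ is exact if and only if $\Hom_{R}(\Hom_{\D(R)}(\Sigma^{j}R,X),-)$ is exact for all $X\in\D(R)^{\c}$ and all $j$; equivalently, every homology module of every perfect complex is a projective $R$-module.

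To finish (3)$\Rightarrow$(4), I would observe that every finitely presented $R$-module is such a homology module: a finite presentation $R^{a}\to R^{b}\to M\to 0$ gives a two-term perfect complex $C=[R^{a}\to R^{b}]$ with $\Hom_{\D(R)}(R,C)\simeq M$. Hence the condition obtained above is equivalent to $\mod{R}=\msf{proj}(R)$, which is exactly von Neumann regularity. The same computation re-proves (4)$\Rightarrow$(3), since over a coherent von Neumann regular ring the homology of a perfect complex is finitely presented, hence projective, so $i_{*}$ is exact. Combined with (4)$\Rightarrow$(2) and the specialisation (2)$\Rightarrow$(3) to the Freyd subcategory, this gives (2)$\Leftrightarrow$(3)$\Leftrightarrow$(4).

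It remains to incorporate (1). By \cref{prel:definable} the $\Sigma$-invariant definable subcategories of $\D(R)$ correspond bijectively to $\Sigma$-invariant Serre subcategories of $\mod{\D(R)^{\c}}$, and under this dictionary a definable subcategory is triangulated precisely when the corresponding Serre subcategory is perfect, by Krause's correspondence between $\Sigma$-invariant perfect Serre subcategories and smashing localisations \cite{krcqsl}; this identifies (1) with perfectness of all $\Sigma$-invariant Serre subcategories, into which (2) translates. The main obstacle is the computation in the second paragraph: correctly identifying the right adjoint $\rho$ with $i_{*}$—this is exactly where right coherence enters—and turning its exactness into projectivity of the homology of perfect complexes, after which the two-term-complex trick reduces everything to the standard characterisation of von Neumann regularity. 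A secondary subtlety is the reading of (2): the bijection between Serre subcategories of $\mod{R}$ and $\Sigma$-invariant Serre subcategories of $\mod{\D(R)^{\c}}$ is literal only once $Q$ is an equivalence, so beforehand (2) should be understood via pullback along $Q$, with the Freyd subcategory in (3) being the case of the zero subcategory.
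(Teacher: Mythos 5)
Your proof is correct, but the core implication $(3)\Rightarrow(4)$ is established by a genuinely different argument from the paper's. The paper stays inside the definable-subcategory dictionary: perfectness of the Freyd subcategory forces $\mc{D}(\scr{S}\cap\mod{\D(R)^{\c}})$, the smallest definable subcategory containing $\scr{R}$, to be triangulated and hence equal to $\D(R)$; this is then identified with Freyd's generating hypothesis (citing \cite{krcfsh}), which \cref{prop:fghholds} has already shown to be equivalent to von Neumann regularity. You instead unwind the definition of perfectness directly: right coherence identifies the relevant localisation with $Q=i^{*}$ and its right adjoint with the right Kan extension $i_{*}$, whose value $(i_{*}M)(X)\simeq\prod_{j}\Hom_{R}(\Hom_{\D(R)}(\Sigma^{j}R,X),M_{j})$ shows that exactness of $i_{*}$ is precisely projectivity of the homology of perfect complexes; the two-term-complex trick then yields $\mod{R}=\msf{proj}(R)$. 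Your computation is sound (epimorphisms in the functor category are pointwise, and a product of maps is surjective iff each factor is), and it has the side benefits of re-proving $(4)\Rightarrow(3)$ without invoking \cref{prop:telescope} and of avoiding the external reference to \cite{krcfsh}; the paper's route is shorter given the machinery it has already set up and reuses the same dictionary needed for $(1)\Leftrightarrow(2)$. You are also right to flag that $(2)$ as stated speaks of $\mod{R}$ while perfectness is defined for Serre subcategories of $\mod{\D(R)^{\c}}$; the paper's own proof silently works with $\Sigma$-invariant Serre subcategories of $\mod{\D(R)^{\c}}$, and your reading via pullback along $Q$ is an a priori weaker condition, but since your cycle of implications closes through $(3)$ and $(4)$ the equivalence holds under either interpretation.
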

\begin{proof}
The crucial fact that we shall use, that is the substance of \cite[\S 12]{krcqsl}, is that a definable subcategory $\mc{D}\subseteq\D(R)$ is triangulated if and only if $\mc{S}(\mc{D})\subseteq\mod{\D(R)^{\c}}$ is perfect and $\Sigma$-invariant. This fact quite quickly deals with the equivalence $(1)\iff (2)$. Indeed, the bijection of \cref{prel:definable} restricts to bijections between $\Sigma$-invariant (respectively triangulated) definable subcategories of $\D(R)$ and $\Sigma$-invariant (respectively $\Sigma$-invariant perfect) Serre subcategories of $\mod{\D(R)^{\c}}$, which proves the equivalence.

Since the Freyd subcategory $\scr{S}\cap\mod{\D(R)^{\c}}$ is $\Sigma$-invariant, the implication $(2)\implies (3)$ is trivial, while $(4)\implies (2)$ is \cref{prop:telescope}. The proof is therefore complete by showing the $(3)\implies (4)$ implication. If the Freyd subcategory $\scr{S}\cap\mod{\D(R)^{\c}}$ is perfect, then, as it is also $\Sigma$-invariant, we have $\mc{D}(\scr{S}\cap\mod{\D(R)^{\c}})$ being a triangulated subcategory of $\D(R)$. Yet $\mc{D}(\scr{S}\cap\mod{\D(R)^{\c}})$ is the smallest definable subcategory containing $\scr{R}$ - indeed, if $\mc{D}'$ is another definable subcategory containing $\scr{R}$, then $\mc{S}(\mc{D}')\subseteq\scr{S}\cap\mod{\D(R)^{\c}}$ by the bijection in \cref{prel:definable}; as the bijection is order reversing, it follows that $\mc{D}(\scr{S}\cap\mod{\D(R)^{\c}})\subseteq\mc{D}'$. Yet since the smallest triangulated subcategory closed under coproducts and containing $R$ is $\D(R)$, it follows that $\D(R)$ is the smallest definable subcategory containing $\scr{R}$ and this is equivalent to Freyd's generating hypothesis, see \cite{krcfsh}, which, as we showed in \cref{prop:fghholds}, is equivalent to $R$ being von Neumann regular.
\end{proof}

We are also able to describe ideals of morphisms in $\D(R)^{\c}$ for von Neumann regular rings. 

\begin{slab}[Cohomological ideals of compacts]    
Recall that an ideal $\mf{I}\subseteq \D(R)^{\c}$ is called \emph{cohomological} if there is a cohomological functor $H\colon\D(R)^{\c}\to\scr{A}$ into an abelian category such that $\mf{I}=\{\phi:H(\phi)=0\}$. The map $\mf{I}\mapsto \msf{Im}(\mf{I})=\{f\in\mod{\T^{\c}}:f\simeq \msf{Im}(\y\alpha) \t{ for some }\alpha\in\mf{I}\}$ establishes a bijection between Serre subcategories of $\mod{\D(R)^{\c}}$ and cohomological ideals of $\D(R)^{\c}$, see \cite[\S 8]{krcqsl}.

An ideal $\mf{I}\subseteq\D(R)^{\c}$ is \emph{exact} if it is cohomological and satisfies $\mf{I}^{2}=\mf{I}$ and $\Sigma \mf{I}=\mf{I}$; there is a bijection between smashing subcategories and exact ideals of $\T^{\c}$, see \cite[\S12]{krcqsl}.
\end{slab}

For the telescope conjeture, cohomological ideals generated by identity maps are fundamental.

\begin{lem}\label{lem:ciregular}
Let $\T$ be a compactly generated triangulated category, and suppose $\mf{I}$ is a cohomological ideal of $\T^{\c}$. If $\mf{I}$ is generated by identity maps then $\msf{Im}(\mf{I})$ is generated by projective objects. If $\T=\D(R)$ for $R$ von Neumann regular, then the converse holds, that is $\msf{Im}(\mf{I})$ is generated by projective objects if and only if $\mf{I}$ is generated by identity maps.
\end{lem}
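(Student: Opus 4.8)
The plan is to handle the two directions separately, throughout using the bijection between cohomological ideals of $\T^{\c}$ and Serre subcategories of $\mod{\T^{\c}}$ from \cite[\S 8]{krcqsl}, under which $\mf{I}$ corresponds to $\msf{Im}(\mf{I})$ and, conversely, a Serre subcategory $\mc{S}$ corresponds to the ideal $\{\phi:\msf{Im}(\y\phi)\in\mc{S}\}$. The background facts I would lean on are that each representable $\y C$ with $C\in\T^{\c}$ is a finitely presented projective object of $\Mod{\T^{\c}}$, and that $\y$ is fully faithful on $\T^{\c}$, since compact objects are pure projective (\cref{prel:puretri}).

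For the first, general, implication I would unwind that $\mf{I}$ being generated as an ideal by a family of identities $\{\mathrm{id}_{C_j}\}$ means precisely that a morphism lies in $\mf{I}$ if and only if it factors through a finite direct sum of the $C_j$. For such a $\phi=g\circ f$ factoring through $\bigoplus_k C_{j_k}$, the functor $\msf{Im}(\y\phi)$ is a subobject of the quotient $\msf{Im}(\y g)$ of $\bigoplus_k\y C_{j_k}$, hence a subquotient of $\bigoplus_k\y C_{j_k}$. As $\msf{Im}(\mf{I})$ is Serre and contains each $\y C_j=\msf{Im}(\y\,\mathrm{id}_{C_j})$, I conclude that $\msf{Im}(\mf{I})$ is exactly the Serre subcategory generated by the projective objects $\{\y C_j\}$.

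For the converse I would first record the structural consequences of $R$ being von Neumann regular already exploited in \cref{prop:telescope}: every finitely presented object of $\Mod{\D(R)^{\c}}\simeq\prod_{\Z}\Mod R$ is projective, every short exact sequence in $\mod R$ splits, and every $X\in\D(R)^{\c}$ decomposes as $\bigoplus_i H_i(X)[-i]$ with $H_i(X)$ finitely generated projective, so that $\Hom_{\D(R)}(X,Y)\simeq\bigoplus_i\Hom_R(H_iX,H_iY)$. Two consequences drive the argument: combining projectivity of finitely presented functors with idempotent completeness of $\D(R)^{\c}$ shows every object of $\mod{\D(R)^{\c}}$ is representable; and every morphism $\phi$ factors as $X\to D\to Y$ through an object $D=\bigoplus_i\msf{Im}(\phi_i)[-i]\in\D(R)^{\c}$ with $\y D\simeq\msf{Im}(\y\phi)$. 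In particular the hypothesis that $\msf{Im}(\mf{I})$ is generated by projectives is automatic, and I may write $\msf{Im}(\mf{I})=\mc{S}=\langle\{\y C_j\}\rangle$ for representables $\y C_j$.

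Finally I would identify $\mf{I}$ with the ideal $\mf{J}=\langle\{\mathrm{id}_{C_j}\}\rangle$. The inclusion $\mf{J}\subseteq\mf{I}$ follows from the first-direction subquotient computation together with the characterization $\mf{I}=\{\phi:\msf{Im}(\y\phi)\in\mc{S}\}$. For $\mf{I}\subseteq\mf{J}$, given $\phi\in\mf{I}$ I factor it through $D$ with $\y D\simeq\msf{Im}(\y\phi)\in\mc{S}$; since every short exact sequence splits, $\mc{S}$ is the additive closure of $\{\y C_j\}$, so $\y D$ is a summand of some $\bigoplus_k\y C_{j_k}$, whence $D$ is a summand of $\bigoplus_k C_{j_k}$ by full faithfulness and idempotent completeness, and thus $\phi$ factors through $\bigoplus_k C_{j_k}$, i.e. $\phi\in\mf{J}$. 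The main obstacle is precisely this last inclusion: passing from the membership $\msf{Im}(\y\phi)\in\mc{S}$ at the level of functors back to a genuine factorization of $\phi$ through the $C_j$ inside $\D(R)^{\c}$. This is where the splitting of sequences over a von Neumann regular ring and the idempotent completeness of $\D(R)^{\c}$ are essential, and where the statement genuinely fails for general rings.
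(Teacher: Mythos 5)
Your proof is correct and follows essentially the same route as the paper: the forward direction via the observation that a map factoring through a (sum of) identities has image a subquotient of representable projectives, and the converse via the fact that over a von Neumann regular ring every finitely presented functor is projective, hence representable, so the epi--mono factorisation of $\y\gamma$ descends to a factorisation of $\gamma$ through an identity. Your converse is phrased slightly more elaborately (fixing generators $\{\y C_j\}$ of the Serre subcategory and comparing the two ideals), whereas the paper factors each $\gamma\in\mf{I}$ directly through $\mathrm{Id}_C$ with $\y C\simeq\msf{Im}(\y\gamma)$, but the underlying mechanism is identical.
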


\begin{proof}
First, recall that if $\phi_{1},\phi_{2}\in\Hom_{\T^{\c}}(A,B)$, then the sum $\phi_{1}+\phi_{2}$ is given by the composition $A\xrightarrow{\Delta}A\oplus A\xrightarrow{\phi_{1}\oplus\phi_{2}}B\oplus B\xrightarrow{\nabla}B$, where $\Delta$ is the diagonal map and $\nabla$ is the codiagonal. Therefore, if, for both $i=1,2$, the map $\phi_{i}$ factors through $\t{Id}_{C_{i}}$ for some $C_{i}\in\T^{\c}$, we see that $\phi_{1}+\phi_{2}$ factors through $\t{Id}_{C_{1}\oplus C_{2}}$. 

As such, by induction, if $f\simeq\msf{Im}(\y\phi)$ for some $\phi\in \mf{I}$ and $\mf{I}$ is generated by identity maps, we may assume that $\phi=\alpha\circ\t{Id}_{A}\circ\beta$ for some $A\in\T^{\c}$. In this case, we have $f\simeq \msf{Im}(\y(\alpha\circ\t{Id}_{A}\circ\beta))$, and this is a subobject of $\msf{Im}(\y(\alpha\circ\t{Id}_{A}))$; furthermore $\msf{Im}(\y(\alpha\circ\t{Id}_{A})$ is a quotient of $\msf{Im}(\y\t{Id}_{A})$, which is a projective object. This shows that $f$ is a subquotient of a projective object, and thus $\msf{Im}(\mf{I})$ is generated by projectives.

Let $R$ be a von Neumann regular ring and suppose that $\gamma\colon A\to B$ is a morphism in $\mf{I}$. There is then a factorisation $\y A\twoheadrightarrow \msf{Im}(\y\gamma)\hookrightarrow\y B$ of $\y \gamma$ in $\mod{\D(R)^{\c}}$, but we have seen that every object in $\mod{\D(R)^{\c}}$ is projective, and thus $\msf{Im}(\y\gamma)\simeq \y C$ for some $C\in\D(R)^{\c}$. By the fullyfaithfullness of $\y$ on compact objects, it follows that $\gamma$ factors through $\t{Id}_{C}$, so $\mf{I}$ is generated by identity morphisms.
\end{proof}

\begin{rem}
We get another proof that the telescope conjecture holds for derived categories of von Neumann regular rings from \cref{lem:ciregular}. Indeed, the telescope conjecture is equivalent to the statement 
\begin{quote}
\centering
every exact ideal is generated by identity maps    
\end{quote}
see \cite[Theorem 13.4]{krcqsl}, but every Serre subcategory of $\mod{\D(R)^{\c}}$ is generated by projective objects, so this statement holds immediately. Note that all ideals of $\D(R)^{\c}$, for $R$ von Neumann regular, can be defined in terms of ideals of $\mod{R}$ via the strong generating hypothesis \cref{rem:sgh}.
\end{rem}

Let us now return to the question of Brown--Adams representability. To begin with, we recall a classic construction of Olivier, and a more recent generalisation of it due to Herzog and L'Innocente.

\begin{slab}[von Neumann approximations of rings]
In \cite[Proposition 5]{olivier}, Olivier demonstrated that the inclusion of the category of commutative von Neumann regular rings inside the category of commutative rings admits a left adjont, and as such corresponding to any commutative ring $A$ is a unique commutative von Neumann ring $T(A)$. Explicitly,
\[
T(A)=A[x_{a}:a\in A]/(x_{a}^{2}a-x_{a}, x_{a}a^{2}-a)_{a\in A}.
\]
Furthermore, the map $A\to T(A)$ given by the unit of the adjunction is an epimorphism of rings.

This construction was generalised to all rings in \cite[Theorem 2.2]{herzinn}. More precisely, it is shown that for a ring $R$ there is a unique abelian von Neumann regular ring $R^{\t{ab}}$ with a ring epimorphism $R\to R^{\t{ab}}$ that is universal with respect to this property. Unlike in the commutative case, $R^{\t{ab}}$ can be the zero ring; see \cite[Corollary 2.3]{herzinn} for necessary and sufficient conditions for this to occur.
\end{slab}

We now have the following result.

\begin{prop}\label{prop:vnrdeterminesbrown}
Let $R$ be a ring. If $R^{\mrm{ab}}\neq 0$, then if $R^{\mrm{ab}}$ is not hereditary, $\D(R)$ is not a Brown category.
\end{prop}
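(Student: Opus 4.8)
The plan is to reduce the whole statement to the ordinary global dimension of $R^{\mrm{ab}}$, which for a von Neumann regular ring coincides with its pure global dimension, and then to transport the resulting lower bound back up to $\D(R)$ through the inequalities already in hand. Recall that the von Neumann approximation comes with a ring epimorphism $R\to R^{\mrm{ab}}$ onto an abelian von Neumann regular ring, so both \cref{prop:ringepi} and the structure theory of von Neumann regular rings are at my disposal.

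First I would record that for any von Neumann regular ring $S$ one has $\msf{pgdim}(S)=\msf{gldim}(S)$. The reason is that every finitely presented $S$-module is projective, so $\Hom_{S}(A,-)$ is exact for all $A\in\mod{S}$; consequently every short exact sequence in $\Mod{S}$ is pure, the pure exact structure coincides with the abelian one, and hence $\t{PExt}_{S}^{n}\simeq\t{Ext}_{S}^{n}$ for all $n\geq 0$. Equivalently, as noted in the discussion of von Neumann regular rings, $d_{S}\colon\Flat{S}=\Mod{S}\to\mbb{D}(\Mod{S})$ is an equivalence, so the category in which $\t{PExt}$ is computed is just $\Mod{S}$ itself; either way the two global dimensions agree. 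This is the identity already anticipated in the introduction.

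With this in place the argument is a short concatenation. Since $R^{\mrm{ab}}\neq 0$ is not hereditary, its global dimension is at least two, so by the previous step $\msf{pgdim}(R^{\mrm{ab}})=\msf{gldim}(R^{\mrm{ab}})\geq 2$. Applying \cref{prop:ringepi} to the ring epimorphism $R\to R^{\mrm{ab}}$ gives $\msf{pgdim}(R^{\mrm{ab}})\leq\msf{pgdim}(R)$, whence $\msf{pgdim}(R)\geq 2$. Finally \cref{prop:pgdimineq} yields $\msf{pgdim}(\D(R))\geq\msf{pgdim}(R)\geq 2$, so $\D(R)$ fails Neeman's criterion \cref{neemancrit} and is not a Brown category; this is precisely the conclusion already recorded in \cref{intro:thma}.

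The only step carrying any content is the identification $\msf{pgdim}(R^{\mrm{ab}})=\msf{gldim}(R^{\mrm{ab}})$, and even that is immediate once one observes that purity is vacuous over a von Neumann regular ring. I therefore do not anticipate a genuine obstacle: the substance lies entirely in assembling \cref{prop:ringepi}, \cref{prop:pgdimineq}, and the von Neumann regular computation in the correct order, with the hypotheses $R^{\mrm{ab}}\neq 0$ and non-hereditary supplying exactly the bound $\msf{gldim}(R^{\mrm{ab}})\geq 2$ needed at the base of the chain.
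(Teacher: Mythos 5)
Your proof is correct and follows essentially the same route as the paper: the chain $\msf{pgdim}(R^{\mrm{ab}})\leq\msf{pgdim}(R)\leq\msf{pgdim}(\D(R))$ via \cref{prop:ringepi} and \cref{prop:pgdimineq}, together with the identification of pure and ordinary global dimension over a von Neumann regular ring. Your justification of that identification (purity is vacuous since $\mod{S}=\msf{proj}(S)$) is a detail the paper leaves implicit, and your citation of \cref{prop:pgdimineq} for the second inequality is in fact the correct one.
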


\begin{proof}
Since $R\to R^{\t{ab}}$ is an epimorphism, we have $\msf{pgdim}(R^{\t{ab}})\leq \msf{pgdim}(R)$ by \cref{prop:ringepi}, and we further have $\msf{pgdim}(R)\leq \msf{pgdim}(\D(R))$ by \cref{prop:hringepi}. Since, as $R$ is von Neumann regular, we have $\msf{pgdim}(R)=\msf{gdim}(R)$, we see that if $R$ is not hereditary it cannot be the case that $\D(R)$ is Brown, by \cref{neemancrit}.
\end{proof}

\begin{ex}
Clearly then, one would like to know when $R^{\t{ab}}$ is, or is not, hereditary. In the case that $R$ is commutative, which ensures that $R^{\t{ab}}\neq 0$, a result of Kie\l pi\'{n}ski and Simson (see \cite{HZ} or \cite{KS}) states that if $\t{card}(R)=\aleph_{m}$ and $X$ is a set of cardinality $\aleph_{n}$, then $\msf{pgdim}(R[X])=1+\msf{max}\{m,n\}$. Consequently, by \cref{prop:ringepi}, we can bound $\msf{pgdim}(R^{\t{ab}})=\msf{gdim}(R^{\t{ab}})$ above by $1+n$, where $\aleph_{n}=\msf{card}(R)$. As such, whenever $R$ is countable, one has that $R^{\t{ab}}$ is hereditary. This can also be deduced from \cite[Proposition 10.5]{GJdim}. However, this is not saying much: if $R$ is countable, then $\msf{perf}(R)$ has a countable skeleton, hence $\D(R)$ is a Brown category by Neeman's result discussed in \cref{rem:exofbrown}. 

Of course, if one knows the pure global dimension of $\Mod{R}$, one need not even make the passage to $R^{\t{ab}}$. Examples of rings whose pure global dimension is known can be found in \cite[\S 11]{JL}, and a survey of pure global dimension can be found in \cite{HZ}.
\end{ex}

\begin{chunk}\label{schemes}
Let us now demonstrate how the above results can be applied to schemes. Let $X$ be a quasi-compact quasi-separated (qcqs) scheme, which by  \cite[Theorem 3.1.1]{BvdB} ensures that the category $\D_{\t{qc}}(X)$, the derived category of complexes of $\scr{O}_{X}$-modules having quasi-coherent cohomology, is a compactly generated triangulated category. The compact objects are the perfect complexes $\msf{perf}(X)$.

Let $f\colon U\to X$ be the inclusion of a quasi-compact open subscheme. The assumption that $X$ is qcqs means, by combining \cite[Proposition 3.9.2]{lipman} and Grothendieck duality \cite[Theorem 4.1]{lipman} (or \cite[Lemma 48.3.1]{stack}) that there is an adjoint triple $\textbf{L}f^{*}\dashv\textbf{R}f_{*}\dashv f^{\times}$ where $\textbf{R}f_{*}\colon\D_{\t{qc}}(U)\to\D_{\t{qc}}(X)$ is fully faithful. In particular, $\textbf{R}f_{*}$ is a fully faithful definable triangulated functor. 
\end{chunk}

\begin{prop}\label{prop:brownschemes}
Let $X$ be a qcqs scheme. Then $\D_{\mrm{qc}}(X)$ is a Brown category only if $\msf{pgdim}(\scr{O}_{X,x})\leq 1$, or $\scr{O}_{X,x}^{\mrm{ab}}$ is a hereditary von Neumann regular ring, for all $x\in X$.
\end{prop}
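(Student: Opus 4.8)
The plan is to reduce the Brown property of $\D_{\mrm{qc}}(X)$ to the derived category of the local ring $\scr{O}_{X,x}$ and then feed it into the pure global dimension inequalities already established. Fix $x\in X$ and choose an affine open neighbourhood $U=\mathrm{Spec}(A)$ of $x$, so that $\scr{O}_{X,x}\simeq A_{\p}$ for the prime $\p\subseteq A$ corresponding to $x$ and $\D_{\mrm{qc}}(U)\simeq\D(A)$. Since $U$ is affine, hence quasi-compact, and $X$ is qcqs, the inclusion $f\colon U\to X$ is covered by \cref{schemes}, which supplies a fully faithful definable triangulated functor $\mathbf{R}f_{*}\colon\D_{\mrm{qc}}(U)\to\D_{\mrm{qc}}(X)$.

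The first thing I would check is that $\mathbf{R}f_{*}$ descends to a fully faithful definable functor $\Flat{\D_{\mrm{qc}}(U)^{\c}}\to\Flat{\D_{\mrm{qc}}(X)^{\c}}$ on the associated flat functor categories. This is exactly the full-faithfulness computation carried out in the proof of \cref{prop:hringepi}: writing a source object as a colimit of representables and, using \cref{prel:bergman}, a target object as an inverse limit of $\y$ applied to pure injectives, the full faithfulness of $\mathbf{R}f_{*}$ on $\D_{\mrm{qc}}(U)$ together with its definability forces the induced functor to be fully faithful. Granting this, \cref{thm:BAfunctoriality}(2) applies, so the hypothesis that $\D_{\mrm{qc}}(X)$ is Brown forces $\D_{\mrm{qc}}(U)\simeq\D(A)$ to be Brown as well.

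From here the argument is purely ring-theoretic. Neeman's criterion \cref{neemancrit} gives $\msf{pgdim}(\D(A))\leq 1$, and \cref{prop:pgdimineq} then yields $\msf{pgdim}(A)\leq\msf{pgdim}(\D(A))\leq 1$. As the localisation map $A\to A_{\p}$ is a ring epimorphism, \cref{prop:ringepi} gives $\msf{pgdim}(\scr{O}_{X,x})=\msf{pgdim}(A_{\p})\leq\msf{pgdim}(A)\leq 1$, which already secures the first alternative. For the second, observe that $\scr{O}_{X,x}$ is commutative, so $\scr{O}_{X,x}^{\mrm{ab}}\neq 0$ by Olivier's construction \cite{olivier}; the canonical ring epimorphism $\scr{O}_{X,x}\to\scr{O}_{X,x}^{\mrm{ab}}$ and \cref{prop:ringepi} give $\msf{pgdim}(\scr{O}_{X,x}^{\mrm{ab}})\leq\msf{pgdim}(\scr{O}_{X,x})\leq 1$, and since $\scr{O}_{X,x}^{\mrm{ab}}$ is von Neumann regular its pure global dimension equals its global dimension, whence it is hereditary. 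Thus both disjuncts in fact hold, so the stated disjunction follows a fortiori.

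The main obstacle is the geometric reduction in the second paragraph: confirming that the fully faithful triangulated functor $\mathbf{R}f_{*}$ induces a fully faithful functor between the flat functor categories, which is what licenses the use of \cref{thm:BAfunctoriality}(2). Once this is in place — and it is essentially identical to the full-faithfulness calculation in \cref{prop:hringepi}, relying only on full faithfulness of $\mathbf{R}f_{*}$ on pure injectives and on definability — the remainder is a direct chain through \cref{prop:pgdimineq}, \cref{prop:ringepi}, and \cref{neemancrit}, together with the standard fact that localisations are ring epimorphisms. Everything after the passage from $X$ to the affine open and then to the local ring is local and formal.
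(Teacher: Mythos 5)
Your proof is correct and follows essentially the same route as the paper: reduce to an affine open $U=\mathrm{Spec}(A)$ via the fully faithful definable functor $\mathbf{R}f_{*}$ from \cref{schemes}, then chain through \cref{prop:pgdimineq} and \cref{prop:ringepi} down to $\scr{O}_{X,x}\simeq A_{\p}$ (the paper phrases this as a single string of $\msf{pgdim}$ inequalities rather than first transferring Brown-ness via \cref{thm:BAfunctoriality}(2) and then invoking \cref{neemancrit}, but the content is identical). Your explicit check that $\mathbf{R}f_{*}$ descends to a fully faithful definable functor on the flat functor categories, by the same computation as in \cref{prop:hringepi}, is a detail the paper leaves implicit and is correctly identified and handled.
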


\begin{proof}
Let $U\simeq\t{Spec}(A)\subseteq X$ be an affine subscheme. There are then inequalities, for any $\mf{p}\in\t{Spec}(A)$,
\[
\msf{pgdim}(\D_{\t{qc}}(X))\geq \msf{pgdim}(\D_{\t{qc}}(U))\geq \msf{pgdim}(A)\geq \msf{pgdim}(A_{\mf{p}})\geq \msf{pgdim}(A_{\mf{p}}^{\t{ab}}),
\]
where the first is by the discussion in \cref{schemes} combined with \cref{thm:BAfunctoriality}, the second is by \cref{prop:pgdimineq}, and the last two are by \cref{prop:ringepi}. Thus if $\D_{\t{qc}}(X)$ is a Brown category, we have $1\geq \msf{pgdim}(A_{\mf{m}})\geq \msf{pgdim}(A_{\mf{m}}^{\t{ab}})$. As any $x\in X$ lies in an affine open, the result now follows immediately.
\end{proof}

\subsection{Beyond derived categories}\label{sec:beyondderived}

One can perform similar arguments to the above in arbitrary compactly generated triangulated categories, in order to determine whether or not they satisfy Brown--Adams representability. The following demonstrates how von Neumann regular rings can be used in this generality.

\begin{prop}
Let $\T$ be a compactly generated triangulated category. If there is a faithful definable functor $F\colon \D(R)\to \T$ with $R$ von Neumann regular, then if $R$ is not hereditary, $\T$ is not Brown.
\end{prop}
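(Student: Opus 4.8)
The plan is to transfer a lower bound on pure global dimension from $\D(R)$ to $\T$ by playing the two hypotheses on $F$ against each other: definability forces $F$ to preserve phantom maps, while faithfulness prevents it from annihilating a nonzero morphism. This is what lets mere faithfulness (rather than full faithfulness, which is what a result like \cref{thm:BAfunctoriality} would want) suffice.

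First I would record the pure global dimension of the source. As noted in the proof of \cref{prop:vnrdeterminesbrown}, since $R$ is von Neumann regular one has $\msf{pgdim}(R)=\msf{gdim}(R)$, so the hypothesis that $R$ is not hereditary gives $\msf{pgdim}(R)=\msf{gdim}(R)\geq 2$. Combined with \cref{prop:pgdimineq} this yields $\msf{pgdim}(\D(R))\geq 2$. Next I would reinterpret this through phantom maps. Recall from Beligiannis \cite{belrhap} that the pure global dimension of a compactly generated triangulated category is the nilpotency degree of its phantom ideal $\mathrm{Ph}$: one has $\msf{pgdim}(\T)\leq n$ precisely when every composite of $n+1$ phantom maps vanishes, the case $n=1$ being exactly \cref{neemancrit}. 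Thus $\msf{pgdim}(\D(R))\geq 2$ supplies a nonzero composite $\psi\circ\phi$ of two phantom maps in $\D(R)$. Concretely, any nonzero class of $\t{Ext}^2_R(M,N)\simeq\Hom_{\D(R)}(M,\Sigma^2 N)$ is a Yoneda product of two $\t{Ext}^1$-classes, each of which raises homological degree and is therefore killed by $\y$, i.e. is phantom.

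Then I push this forward along $F$. Since $F$ is definable, \cref{prel:deffunctors} gives $\y_{\T}\circ F\simeq\hat F\circ\y_{\D(R)}$, so $F$ carries phantom maps to phantom maps: if $\y_{\D(R)}\phi=0$ then $\y_{\T}(F\phi)=\hat F(\y_{\D(R)}\phi)=\hat F(0)=0$, and likewise for $\psi$. Hence $F\psi\circ F\phi$ is a composite of two phantom maps in $\T$. Because $F$ is faithful and $\psi\circ\phi\neq 0$, we get $F\psi\circ F\phi=F(\psi\circ\phi)\neq 0$. Therefore $\mathrm{Ph}(\T)^2\neq 0$, so $\msf{pgdim}(\T)\geq 2$, and $\T$ is not Brown by \cref{neemancrit}.

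The step I expect to be the main obstacle is the identification of pure global dimension with the nilpotency of the phantom ideal, which is what turns the $\t{Ext}^2$-vanishing form of \cref{neemancrit} into the statement about composites of phantom maps used twice above. Everything else is routine: the two reductions via von Neumann regularity, the one-line fact that definable functors preserve phantoms, and the one-line fact that faithful functors do not annihilate nonzero maps. To keep the argument self-contained I would, if needed, avoid quoting the nilpotency characterization wholesale and instead verify by hand that the nonzero two-fold phantom $F(\psi\circ\phi)$ detects a nonzero class in $\t{Ext}^2_{\T^\c}(\y_{\T}FM,\Sigma^{-2}\y_{\T}FN)$ through the phantom filtration relating $\Hom_{\T}$ to $\t{Ext}^{*}_{\T^\c}$, and then invoke \cref{neemancrit} directly.
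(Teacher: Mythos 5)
Your argument is correct in substance, but it takes a genuinely different route from the paper. The paper never touches phantom composites: it extends $F$ to $\hat{F}\colon\Flat{\D(R)^{\c}}\to\Flat{\T^{\c}}$, shows $\hat{F}$ is (fully) faithful by writing flat functors as filtered colimits of representables and inverse limits of injectives (Bergman's theorem) and passing to limits, and then invokes \cref{thm:BAfunctoriality}(2) together with $\msf{pgdim}(\D(R))\geq\msf{pgdim}(R)=\msf{gdim}(R)\geq 2$. Your route instead produces an explicit obstruction: a nonzero two-fold phantom composite in $\D(R)$ pushed forward along $F$. This is arguably cleaner on the key point that mere faithfulness of $F$ suffices (the paper has to work to upgrade faithfulness of $F$ to full faithfulness of $\hat{F}$), and your concrete construction via Yoneda products actually bypasses \cref{prop:pgdimineq} entirely, needing only $\t{Ext}^{2}_{R}(M,N)\neq 0$. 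What it does not give is the finer inequality $\msf{pgdim}(\T)\geq\msf{pgdim}(\D(R))$ that the paper's method yields via \cref{lem:localisation}, and it imports a result (the relation between Brown representability and vanishing of $\mathrm{Ph}^{2}$) that the paper does not set up.

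Two points deserve care. First, you only need the easy direction of the phantom-nilpotency statement, namely that $\T$ Brown implies $\mathrm{Ph}^{2}(\T)=0$; this follows from \cref{neemancrit} by realising a length-one projective resolution of $\y X$ as a triangle $P_{1}\to P_{0}\to X\to\Sigma P_{1}$ with $P_{i}$ pure projective and using that phantoms out of pure projectives vanish. The converse direction of the equivalence you quote (nilpotency of $\mathrm{Ph}$ bounding $\msf{pgdim}$, which is a supremum over \emph{all} flat functors, not just those in the image of $\y$) is more delicate and you should not rely on it; fortunately you never actually use it, since your $\t{Ext}^{2}$ construction produces the nonzero composite directly. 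Second, your justification that the two $\t{Ext}^{1}$-classes are phantom --- ``raises homological degree and is therefore killed by $\y$'' --- is false for general rings (e.g.\ the nonzero class in $\t{Ext}^{1}_{\Z}(\Z/2,\Z/2)$ is not phantom in $\D(\Z)$, as it is detected by the perfect complex resolving $\Z/2$). It is true here only because $R$ is von Neumann regular, so every compact object of $\D(R)$ decomposes as a finite sum of shifted finitely generated projectives, against which any degree-raising map between stalk complexes evaluates to zero. Make that dependence explicit and the proof is complete.
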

\begin{proof}
As $F$ is definable it extends to a definable functor $\hat{F}\colon\Flat{\D(R)^{\c}}\to\Flat{\T^{\c}}$, and if this is faithful we may deduce the conclusion from \cref{thm:BAfunctoriality}. So suppose $M,N\in\Flat{\D(R)^{\c}}$ and write $M\simeq \rlim_{I}\y A_{i}$ for $A_{i}\in\D(R)^{\c}$ and $N\simeq \llim_{J}\y X_{j}$ with $X_{j}\in\msf{Pinj}(\D(R))$, which is just another application of Bergman's result. Since $F$ is fully faithful, there are isomorphisms $\Hom_{\D(R)}(A_{i},X_{j})\hookrightarrow\Hom_{\T}(FA_{i},FX_{j})$ for each $(i,j)\in I\times J$. Now, since $\hat{F}$ is definable, it preserves filtered colimits, but it also preserves limits as it is the restriction of a right adjoint. Consequently, we have $\Hom(\hat{F}M,\hat{F}N)\simeq \llim_{I\times J}\Hom(\hat{F}\y A_{i},\hat{F}\y X_{j})\simeq \llim_{I\times J}\Hom(FA_{i},FXA_{j})$, since $\hat{F}\circ \y \simeq  F$. Consequently the natural map $\Hom(M,N)\simeq \llim_{I\times J}\Hom(\y A_{i},\y X_{j})\to \llim_{I\times J}\Hom(\hat{F}\y A_{i},\hat{F}\y X_{j})\simeq \Hom(\hat{F}M,\hat{F}N)$ is an isomorphism, so $\hat{F}$ is fully faithful.
\end{proof}

Of course, it may not be immediate that such a definable functor exists. For monogenic triangulated categories, we have the following which bypasses such an obstacle.

\begin{thm}\label{thm:monogenic}
Let $\T$ be a monogenic compactly generated triangulated category generated by $S\in\T^{\c}$. If $A=\mrm{End}_{\T}(S)$ has a flat module of projective dimension greater than one, then $\T$ is not a Brown category.
\end{thm}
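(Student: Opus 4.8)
The plan is to adapt the strategy of \cref{prop:pgdimineq} and of the recovery of Keller and Neeman's counterexample, replacing the canonical generator $R$ of $\D(R)$ by the generator $S$ of $\T$. View $A=\mrm{End}_{\T}(S)$ as a one-object preadditive category and let $i\colon A\to\T^{\c}$ be the functor sending the object to $S$; this is fully faithful precisely because $\Hom_{\T^{\c}}(S,S)=A$. By \cref{prel:modules}, $i$ induces an adjoint triple $i^{!}\dashv i^{*}\dashv i_{*}$ with $i^{*}\colon\Mod{\T^{\c}}\to\Mod{A}$ the evaluation $F\mapsto F(S)$, which is exact; and since $i$ is fully faithful, both $i^{!}$ and $i_{*}$ are fully faithful, so in particular $i^{*}i_{*}\simeq\t{Id}$. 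Although \cref{prop:pextisos} supplies an analogous isomorphism of pure Ext groups, \cref{neemancrit} is phrased in terms of ordinary Ext in $\Mod{\T^{\c}}$, so I would instead argue directly with projective resolutions.

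The two technical ingredients I would establish are: (i) $i^{!}$ preserves flat modules, and (ii) for every flat $A$-module $M$ and every $F\in\Mod{\T^{\c}}$ there is a natural isomorphism $\t{Ext}_{A}^{n}(M,i^{*}F)\simeq\t{Ext}_{\T^{\c}}^{n}(i^{!}M,F)$ for all $n$. For (i), note that $i^{!}$ preserves all colimits and carries the regular representation $A$ to $\y S$, which lies in $\Flat{\T^{\c}}$ as $S$ is compact; since every flat $A$-module is a filtered colimit of finite free modules by Lazard's theorem, and $\Flat{\T^{\c}}$ is closed under coproducts and filtered colimits, it follows that $i^{!}M\in\Flat{\T^{\c}}$. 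For (ii), I would show that the left derived functors $L_{k}i^{!}$ vanish on flat modules for $k>0$: as $i^{!}$ preserves filtered colimits and filtered colimits are exact, $L_{k}i^{!}$ commutes with them, while $L_{k}i^{!}$ already vanishes on free modules because $i^{!}$ preserves projectives (being the left adjoint of the exact functor $i^{*}$). Consequently, applying $i^{!}$ to a projective resolution of a flat $M$ yields a projective resolution of $i^{!}M$, and the adjunction isomorphism $\Hom_{\T^{\c}}(i^{!}-,F)\simeq\Hom_{A}(-,i^{*}F)$ then computes the claimed Ext isomorphism.

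With these in hand the conclusion is short. By hypothesis there is a flat $A$-module $M$ with projective dimension at least two, so $\t{Ext}_{A}^{2}(M,N)\neq 0$ for some $N\in\Mod{A}$. Setting $F=i_{*}N$ and using $i^{*}i_{*}\simeq\t{Id}$, ingredient (ii) gives
\[
\t{Ext}_{\T^{\c}}^{2}(i^{!}M,i_{*}N)\simeq\t{Ext}_{A}^{2}(M,N)\neq 0.
\]
Since $i^{!}M\in\Flat{\T^{\c}}$ by (i), this exhibits a flat functor of projective dimension at least two, so $\T$ fails the condition of \cref{neemancrit} and is therefore not a Brown category.

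The main obstacle is ingredient (ii), and more precisely the vanishing of the higher left derived functors of $i^{!}$ on flat modules; once the adjoint triple and the identification $i^{!}A\simeq\y S$ are in place, everything else is formal. It is worth recording that the homological argument uses only that $S$ is compact, the monogenicity of $\T$ serving to single out $A=\mrm{End}_{\T}(S)$ as the relevant invariant.
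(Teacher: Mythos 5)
Your argument is correct, and it even isolates the right level of generality: as you note, only the compactness of $S$ is used, which is consistent with the paper's subsequent example applying the same theorem to the central ring of a tensor-triangulated category. The skeleton is the same as the paper's --- both proofs hinge on the adjoint triple $i^{!}\dashv i^{*}\dashv i_{*}$ induced by the fully faithful embedding of the one-object category $A$ into $\T^{\c}$ via $S$, the fact that $i^{!}$ preserves flats, and \cref{neemancrit} --- but the middle step is genuinely different. The paper packages the triple into the definable-functor framework of \cite{bwdeffun}, observes that $\Lambda=i^{!}$ is fully faithful and restricts to flats, and invokes \cref{lem:localisation} (hence \cref{prop:pextisos} and the $\mbb{D}(\A)$/PExt machinery) to conclude $\msf{pgdim}(\T)\geq\msf{pgdim}(\Flat{A})$. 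You instead transplant the direct computation the paper itself uses for \cref{prop:pgdimineq} and the Keller--Neeman example: prove the ordinary Ext isomorphism $\t{Ext}_{A}^{n}(M,i^{*}F)\simeq\t{Ext}_{\T^{\c}}^{n}(i^{!}M,F)$ for flat $M$ by hand and feed a witness $i_{*}N$ into it. What this buys is a self-contained argument landing directly on the $\t{Ext}^{2}$ form of Neeman's criterion, with no detour through pure global dimension; what it costs is that the acyclicity statement (ii) must be justified. Your filtered-colimit argument there is correct but slightly under-explained (commutation of $L_{k}i^{!}$ with filtered colimits is standard but not free); the cleanest route is to note that $i^{!}$ is the tensor functor $-\otimes_{A}\y S$, since it preserves colimits and sends $A$ to $\y S$, so that $(L_{k}i^{!}M)(C)\simeq\t{Tor}_{k}^{A}(M,\Hom_{\T^{\c}}(C,S))$, which vanishes for $k>0$ precisely because $M$ is flat.
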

\begin{proof}
Consider the functor $F\colon \T\to \Mod{A}$ given by $X\mapsto \Hom_{\T}(S,X)$; as $S\in\T^{\c}$, this functor preserves coproducts, products and pure triangles and is thus a coherent functor, in the sense of \cite[\S 3]{bwdeffun}. As such, by \cite[Theorem 3.2]{bwdeffun} there is a unique definable functor $\hat{F}\colon\Flat{\T^{\c}}\to\Mod{A}$ such that $\hat{F}\circ y\simeq F$. However, since $F$ is cohomological, there is also an adjoint triple
\[
\begin{tikzcd}[column sep = 2cm]
\Mod{\T^{\c}} \arrow[r, "\bar{F}" description] & \Mod{A} \arrow[l, shift left = 2ex, hook', "\rho"] \arrow[l, shift right = 2ex, swap, hook', "\Lambda"]
\end{tikzcd}
\]
where $\bar{F}\circ \y\simeq F$ and $\bar{F}\vert_{\Flat{\T^{\c}}}=\hat{F}$. The existence of the triple can either be induced by \cite[Theorem 3.26]{bwdeffun} and the comments following, or from the fact that $\bar{F}$ is nothing other than the restriction along the fully faithful embedding $S\hookrightarrow \T^{\c}$, once one views $S$ as a singleton subcategory with endomorphisms $A$. In particular, this viewpoint illustrates $\Lambda$ and $\rho$ as fully faithful.
Furthermore, $\Lambda$ itself will preserve flat objects, and thus there is an adjoint pair
\[
\begin{tikzcd}
\Flat{\T^{\c}} \arrow[r, shift right = 1ex,swap,  "\hat{F}"]& \Flat{A} \arrow[l, shift right = 1ex,swap, "\Lambda"]
\end{tikzcd}
\]
with $\hat{F}$ a definable functor. Since $\Lambda$ is fully faithful, we may apply \cref{lem:localisation}, and thus $\msf{pgdim}(\T)\geq \msf{pgdim}(\Flat{A})$. The claim now follows immediately from \cref{neemancrit}.
\end{proof}

\begin{ex}
Let $\T$ be a compactly generated tensor-triangulated category such that the tensor unit $\1$ is a compact object. Then the \emph{central ring} of $\T$, $A_{\T}=\t{End}_{\T}(\1)$ is a commutative ring by \cite[Proposition 2.2]{BalmerSSS}. The same proof as for \cref{thm:monogenic} holds to show that if $A_{\T}$ has a flat module of projective dimension at most one, then $\T$ is not a Brown category.
\end{ex}

One may also use cosilting t-structures to provide a lower bound on pure global dimension for an arbitrary compactly generated triangulated category. We refer the reader to \cite{SaoStov} for background on t-structures and their relationship to localisations of the functor category.

\begin{prop}\label{prop:cosilting}
Let $\T$ be a compactly generated triangulated category and suppose that $\mbb{t}$ is a compactly generated t-structure that restricts to compact objects. Then $\msf{pgdim}(\T)\geq\msf{pgdim}(\msf{fpInj}(\scr{H}_{\mbb{t}}))$. If, additionally $\mbb{t}$ is non-degenerate, then $\msf{pgdim}(\T)\geq\msf{pgdim}(\scr{H}_{\mbb{t}})$.
\end{prop}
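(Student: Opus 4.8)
The plan is to present the heart as a localisation of $\Mod{\T^{\c}}$ and to push pure homological data across the section of this localisation by means of \cref{prop:pextisos}.

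\emph{Setup.} Since $\mbb{t}$ is compactly generated and restricts to $\T^{\c}$, I would first invoke the theory of \cite{SaoStov} to record that $\scr{H}_{\mbb{t}}$ is locally coherent and that (the canonical extension of) its cohomological functor $H^{0}_{\mbb{t}}$ realises $\scr{H}_{\mbb{t}}$ as a finite-type Serre localisation $\bar{H}\colon\Mod{\T^{\c}}\to\scr{H}_{\mbb{t}}$. As the localisation is of finite type, its fully faithful section $R\colon\scr{H}_{\mbb{t}}\to\Mod{\T^{\c}}$ (the right adjoint of $\bar{H}$) preserves products and direct limits, hence is definable, and it admits the left adjoint $\bar{H}$. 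Thus $F=R$ and $\Lambda=\bar{H}$ satisfy the hypotheses of \cref{prop:pextisos}, producing isomorphisms $\t{PExt}^{n}_{\Mod{\T^{\c}}}(N,RY)\simeq\t{PExt}^{n}_{\scr{H}_{\mbb{t}}}(\bar{H}N,Y)$ for all $N\in\Mod{\T^{\c}}$ and $Y\in\scr{H}_{\mbb{t}}$; note also that $\bar{H}R\simeq\mathrm{id}$ and that $\bar{H}$ is essentially surjective.

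\emph{First inequality.} I would specialise to $Y,Y'\in\msf{fpInj}(\scr{H}_{\mbb{t}})$ and take $N=RY'$. Because a finite-type section preserves fp-injective objects, $RY,RY'$ lie in $\Flat{\T^{\c}}=\msf{fpInj}(\Mod{\T^{\c}})$; using that $\t{PExt}^{n}(-,G)=\t{Ext}^{n}(-,G)$ whenever $G$ is fp-injective (exactly as in the proof of \cref{prop:pgdimineq}) on both sides, the displayed isomorphism collapses to $\t{Ext}^{n}_{\scr{H}_{\mbb{t}}}(Y',Y)\simeq\t{Ext}^{n}_{\Mod{\T^{\c}}}(RY',RY)$. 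Since $\msf{pgdim}(\msf{fpInj}(\scr{H}_{\mbb{t}}))$ is the supremum of those $n$ for which the left-hand groups are nonzero and $RY,RY'$ are flat, this supremum is at most $\sup\{n:\t{Ext}^{n}_{\Mod{\T^{\c}}}(f,g)\neq0,\ f,g\in\Flat{\T^{\c}}\}=\msf{pgdim}(\Flat{\T^{\c}})=\msf{pgdim}(\T)$, which is the claim.

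\emph{Second inequality and the main obstacle.} For the non-degenerate case I would instead keep $Y$ arbitrary: as $\bar{H}$ is essentially surjective, every $M\in\scr{H}_{\mbb{t}}$ is of the form $\bar{H}N$, so the displayed isomorphism gives $\msf{pid}_{\scr{H}_{\mbb{t}}}(Y)=\msf{pid}_{\Mod{\T^{\c}}}(RY)$ for each $Y$. The step I expect to be the crux is to show that non-degeneracy forces $R$ to take values in $\Flat{\T^{\c}}$ on \emph{all} of $\scr{H}_{\mbb{t}}$, and not only on fp-injective objects; concretely, that $RY\simeq\y(\iota Y)$, where $\iota\colon\scr{H}_{\mbb{t}}\hookrightarrow\T$ is the inclusion of the heart, so that $RY$ is the (automatically flat) restricted-Yoneda image of $Y$. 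This should follow from the fact that a non-degenerate compactly generated t-structure restricting to compacts is of cosilting type, whence its coaisle is definable and $\y(\iota Y)$ is $\bar{H}$-local, so agrees with the canonical section $RY$. Granting this, the fp-injective collapse of the first part applies verbatim to give $\msf{pid}_{\Mod{\T^{\c}}}(RY)=\msf{id}_{\Mod{\T^{\c}}}(RY)=\msf{pid}_{\Flat{\T^{\c}}}(RY)\le\msf{pgdim}(\T)$, and taking the supremum over $Y$ yields $\msf{pgdim}(\scr{H}_{\mbb{t}})\le\msf{pgdim}(\T)$. This recovers, for the standard t-structure on $\D(R)$, the inequality of \cref{prop:pgdimineq}: there $\bar{H}$ is restriction along $R\hookrightarrow\D(R)^{\c}$ and the section is patently valued in flat functors, which is exactly the structure whose general availability is the point at issue.
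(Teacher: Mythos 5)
Your proposal is correct and follows essentially the same route as the paper: both realise the heart as a finite-type localisation of $\Mod{\T^{\c}}$ whose definable, fully faithful section carries $\msf{fpInj}(\scr{H}_{\mbb{t}})$ into $\Flat{\T^{\c}}$ (and, in the non-degenerate case, all of $\scr{H}_{\mbb{t}}$ into $\Flat{\T^{\c}}$ via restricted Yoneda, which is exactly what \cite[Theorem 3.6]{amv} supplies), and then transfer the relevant Ext groups across the adjunction using \cref{prop:pextisos}. The only presentational differences are that the paper builds the section explicitly as the composite $\scr{H}_{\mbb{t}}\xrightarrow{G}\Mod{\mc{U}_{0}}\xrightarrow{i_{*}}\Mod{\T^{\c}}$ through the compact objects of the aisle, and applies \cref{lem:localisation} to the restricted functor $\msf{fpInj}(\scr{H}_{\mbb{t}})\to\Flat{\T^{\c}}$ between finitely accessible categories, which avoids your final $\mrm{PExt}$-to-$\mrm{Ext}$ collapse.
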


\begin{proof}
We first make some comments on the assumptions. The fact that $\mbb{t}$ restricts to compacts means that the heart $\scr{H}_{\mbb{t}}$ is a locally coherent category with $\msf{fp}(\scr{H}_{\mbb{t}})$ being generated by $\mrm{H}_{\mbb{t}}^{0}(\T^{\c})$, where both these statements are at \cite[Theorem 8.31]{SaoStov}; in particular, if $A\in\T^{\c}$, then $\mrm{H}_{\mbb{t}}^{0}(A)\in\msf{fp}(\scr{H}_{\mbb{t}})$

Now, let $i\dashv \tau^{\leq 0}\colon \mc{U}\hookrightarrow\T$ denote the inclusion of the aisle of $\mbb{t}$ and its right adjoint, and by setting $\mc{U}_{0}=\mc{U}\cap\T^{\c}$, this adjunction restricts to $i\dashv\tau^{\leq 0}\colon \mc{U}_{0}\hookrightarrow\T^{\c}$. Thus, by \cref{prel:modules}, there is a localisation
\[
\begin{tikzcd}[column sep = 1in]
\Mod{\T^{\c}} \arrow[r, two heads, shift left = 1ex, "i^{*}\simeq(\tau^{\leq 0})^{!}"] & \Mod{\mc{U}_{0}}\arrow[l, shift left = 1ex, hook', "i_{*}\simeq(\tau^{\leq 0})^{*}"]
\end{tikzcd}
\]
where $i_{*}$ is a definable functor which preserves fp-injective objects since $\tau^{\leq 0}$ preserves compact objects.  Let $\mbb{y}\colon\mc{U}\to\Mod{\mc{U}_{0}}$ denote the restricted Yoneda embedding. By \cite[Proposition 8.28]{SaoStov}, there a localisation
\[
\begin{tikzcd}
\Mod{\mc{U}_{0}} \arrow[r, shift left = 1ex, "F"]& \scr{H}_{\mbb{t}} \arrow[l, shift left = 1ex, hook', "G"]
\end{tikzcd}
\]
where $F\circ\mbb{y}\simeq \mrm{H}_{\mbb{t}}^{0}\vert_{\mc{U}}$ and $G=\mbb{y}\vert_{\scr{H}_{\mbb{t}}}$. The functor $F$ preserves finitely presented objects, by \cite[Corollary 8.29]{SaoStov}, meaning that $G$ is a definable functor. Let $X\in\scr{H}_{\mbb{t}}$ and consider the composition $(i_{*}\circ G)(X)$, which, by an explicit computation, is naturally isomorphic to $\Hom_{\mc{U}}(\tau^{\leq 0}(-),X)\simeq\Hom_{\scr{H}_{\mbb{t}}}(\mrm{H}_{\mbb{t}}^{0}(-),X)$, as functors on $\T^{\c}$. As such, we see that if $X\in\msf{fpInj}(\scr{H}_{\mbb{t}})$, then $(i_{*}\circ G)(X)$ is a cohomological functor $(\T^{\c})^{\op}\to\ab$, which is the same as being flat. Consequently, $i_{*}\circ G\colon\msf{fpInj}(\scr{H}_{\mbb{t}})\to\Flat{\T^{\c}}$ is a fully faithful definable functor with a left adjoint, hence we have the first claim by \cref{lem:localisation}.

In the case that $\mbb{t}$ is in addition non-degenerate, we may apply \cite[Theorem 3.6]{amv}, which combines the above localisations in to a single one
\[
\begin{tikzcd}
\Mod{\T^{\c}} \arrow[r, shift left = 1ex, two heads, "\bar{\mrm{H}_{\mbb{t}}^{0}}"]& \scr{H}_{\mbb{t}} \arrow[l, shift left = 1ex, hook',"j_{*}"]
\end{tikzcd}
\]
where $\bar{\mrm{H}_{\mbb{t}}^{0}}\circ \y\simeq \mrm{H}_{\mbb{t}}^{0}$ and $j_{*}\mrm{H}_{\mbb{t}}^{0}(X)\simeq \y X$. In this case, it is clear that $j_{*}\colon\scr{H}_{\mbb{t}}\hookrightarrow\Flat{\T^{\c}}$ is a definable embedding, giving $\msf{pgdim}(\scr{H}_{\mbb{t}})\leq\msf{pgdim}(\Flat{\T^{\c}})=\msf{pgdim}(\T)$.
\end{proof}

We note that a t-strucure that satisfies the conditions of the above proposition must arise from a cosilting object, see \cite[Theorem 3.6]{amv} and \cite[Proposition 9.2]{SaoStov}.
\bibliographystyle{abbrv}
\bibliography{references.bib}
\end{document}